\documentclass[11pt]{article}
\usepackage{mathptmx}
\usepackage{geometry}                
\geometry{a4paper}                       
\usepackage{graphicx}
\usepackage{epstopdf}
\usepackage{amsmath, amsthm, amssymb}

\DeclareGraphicsRule{.tif}{png}{.png}{`convert #1 `dirname #1`/`basename #1 .tif`.png}

\newtheorem{lemma}{Lemma}

\newtheorem{theorem}[lemma]{Theorem}
\newtheorem{corollary}[lemma]{Corollary}

\theoremstyle{definition}
\newtheorem{example}[lemma]{Example}

\newcommand{\R}{{\bf R}}
\newcommand{\C}{{\bf C}}

\newcommand{\sign}{{\rm sign}}
\newcommand{\dist}{{\rm dist}}
\newcommand{\Spec}{{\rm Spec}}
\renewcommand{\Re}{{\rm Re}}
\renewcommand{\Im}{{\rm Im}}

\newcommand{\cC}{{\cal C}}

\newcommand{\cG}{{\cal G}}

\title{Spectral Properties of Matrices \\ Associated with Some Directed Graphs}
\author{E. B. Davies \ \ \ \  P. A. Incani}

\begin{document}
\maketitle

\begin{abstract}
We study the spectral properties of certain
non-self-adjoint matrices associated with large directed
graphs. Asymptotically the eigenvalues converge to certain
curves, apart from a finite number that have limits not on
these curves.
\end{abstract}

\section{Introduction}

In an earlier paper we studied the spectral properties of small, possibly random, perturbations of an $n\times n$ Jordan matrix, and proved that most, but not necessarily all, of the eigenvalues lie close to a certain circle as $n\to\infty$. In this paper we consider a similar question when the perturbation is a sparse matrix, whose entries need not be small.

It turns out that this problem is naturally associated with a type of directed graph that is relevant to a variety of problems involving unidirectional flows along one-dimensional channels between several junctions. Since the entity flowing may be a fluid (e.g. blood flow through the network of veins and arteries), traffic or electronic data, the ideas here have potentially wide applicability. The results in this paper also provide asymptotic spectral information for a special class of large directed graphs.

The spectral theory of a directed, acyclic graph is not particularly interesting. By labelling the vertices appropriately one may make the adjacency matrix of the graph upper triangular, so its eigenvalues coincide with its diagonal elements. This correctly suggests that the interesting spectral properties of directed graphs are highly dependent on the the structure of its cycles.

Let $S$ be a finite set and $A_{i,j}$ real or complex numbers indexed by $i,j\in S$. We make $S$ into a directed graph $(S,\to)$ by putting $i\to j$ if $A_{i,j}\not= 0$. We will see that $S$ can be partitioned into disjoint `junctions' $J_1,J_2,..., J_k$ linked by one-dimensional `channels' $C_1,C_2,...,C_h$. A natural way of constructing the graph is to draw the junctions as disjoint localized regions, and then join them by directed channels. However, one may also represent the channels by a sequence of horizontal lines, possibly of varying lengths, all directed from left to right. The junctions collectively may then be regarded as quasi-periodic boundary conditions that join the right ends back to the
left. In many of the applications the entries of $A$ are non-negative and the spectrum of $A$ may also be investigated by using the Perron-Frobenius theory, \cite{LOTS}. The theorems in this paper do not depend on this assumption, which is not appropriate in some contexts.

Given the above assumptions, we construct a new graph $(S^{(n)},\to)$ for every natural number $n$ as follows. We leave the junctions unaltered and replace each channel $C_i$ by a new channel $C_i^{(n)}$ with the same endpoints but with $\#(C_i^{(n)})=n\times\#(C_i)$. Under suitable assumptions the matrix $A$ induces an associated matrix $A^{(n)}$ whose coefficients are parametrized by pairs of points in $S^{(n)}$. Our goal is to investigate the asymptotic behaviour of $\Spec(A^{(n)})$ as $n\to\infty$.

The continuous analogue of our problem replaces each channel by a continuous bounded interval within which the relevant operator is $aD+bI$, where $D$ is first order differentiation and the constants $a,\, b$ vary from one interval to another. Instead of junctions one has to specify suitable boundary conditions connecting certain groups of end points. In contrast to \cite{carlson}, we do not restrict attention to self- or skew-adjoint problems. We finally mention that the wave equation on an undirected graph can be modelled in these terms by replacing each unoriented edge by a pair of oriented edges with the same ends and associating the operators $\pm D$ with the two edges; see \cite{carlson2}.

Every constant in this paper is independent of the asymptotic parameter $n$.
\section{Zeros of some analytic functions}

The material in this section is needed for the proofs of many of the later theorems, but does not mention graphs explicitly. The results that we attain are to some extent analogous to calculations in \cite{bottcher, hs, partington, davies, DH} but the details are different.

We will often need to count the number of zeros that an analytic function has in a particular region. The following elementary result enables us to use a change of variable to simplify this task.

\begin{lemma}\label{conformal} Let $U$ and $V$ be open subsets of $\C$ and $g: U \to V$ conformal. If $F: V \to \C$ is analytic then $F\circ g$ has a zero of order $m$ at $u_0 \in U$ if and only if $F$ has a zero of order $m$ at $g(u_0)$.
\end{lemma}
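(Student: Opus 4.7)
The plan is to work locally at $u_0$ using Taylor expansions and exploit the fact that conformality forces $g'(u_0)\neq 0$, so that composition with $g$ preserves the order of vanishing.

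First I would set $v_0 = g(u_0)$ and write the Taylor expansion of $F$ around $v_0$ as $F(v) = \sum_{k\geq 0} a_k (v-v_0)^k$. By definition, $F$ has a zero of order $m$ at $v_0$ precisely when $a_0 = \cdots = a_{m-1} = 0$ and $a_m \neq 0$.

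Next I would factor $g(u) - g(u_0)$. Since $g$ is analytic and $g'(u_0)\neq 0$ (this is where conformality of $g$ at $u_0$ enters), one may write $g(u)-g(u_0) = (u-u_0)\,h(u)$ for some function $h$ analytic on a neighbourhood of $u_0$ with $h(u_0) = g'(u_0)\neq 0$. Substituting this into the series for $F$ gives, after interchanging the finite leading terms,
\[
F(g(u)) = \sum_{k\geq m} a_k (u-u_0)^k h(u)^k = (u-u_0)^m H(u),
\]
where $H(u) := \sum_{k\geq m} a_k (u-u_0)^{k-m} h(u)^k$ is analytic near $u_0$ and satisfies $H(u_0) = a_m \, g'(u_0)^m$.

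Finally I would read off the equivalence: $F\circ g$ has a zero of order exactly $m$ at $u_0$ iff $H(u_0)\neq 0$, and since $g'(u_0)\neq 0$ this holds iff $a_m\neq 0$, i.e.\ iff $F$ has a zero of order exactly $m$ at $v_0 = g(u_0)$. There is no real obstacle here; the only point requiring care is the nonvanishing of $g'(u_0)$, which is exactly what conformality supplies and which prevents orders from being inflated or deflated under composition.
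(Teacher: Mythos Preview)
Your argument is correct. The paper itself does not supply a proof of this lemma; it simply labels the result as elementary and states it without justification. Your Taylor-expansion computation, hinging on $g'(u_0)\neq 0$, is exactly the standard verification one would give, and there is nothing to compare against.
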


In the proofs to follow we shall make repeated use of a particular type of contour which is perhaps best described as being the boundary of a sector of an annulus centred at the origin. The contour $C_{r,R,\theta_1, \theta_2}$ is completely specified by four parameters $r, R, \theta_1, \theta_2$ where $r < R$ correspond to the two radii of the annulus and $\theta_1 < \theta_2$ correspond to the angular sweep of the sector. More precisely, the contour $C_{r,R, \theta_1, \theta_2} = \{ \gamma_1, \gamma_2, \gamma_3, \gamma_4\}$ is the concatenation of the following four curves

\begin{center}
\begin{tabular}{ll}
$\gamma_1(t) = te^{i \theta_1}$ & $r \leq t \leq R$ \\
$\gamma_2(t) = Re^{it}$ & $\theta_1 \leq t \leq \theta_2$ \\
$\gamma_3(t) = (r + R - t)e^{i \theta_2}$ & $r \leq t \leq R$ \\
$\gamma_4(t) = re^{i (\theta_1 + \theta_2 - t)}$ & $\theta_1 \leq t \leq \theta_2$ \\
\end{tabular}
\end{center}
We note that $C_{r,R, \theta_1, \theta_2}$ encloses the open region $\{ z : r < |z| < R, \theta_1 < \arg z <\theta_2 \}$.

Lemmas~\ref{limset} and \ref{limset2} provide examples of the more general analysis that starts with Theorem~\ref{mostgeneral}.

\begin{lemma}\label{limset} Let $p_n$ denote the polynomial
\begin{equation}
p_n(z) = (z-a)^n (z-b)^n + \alpha (z-a)^n + \beta (z-b)^n + \gamma \label{polyn}
\end{equation}
where $a \neq b$. If $\alpha$, $\beta$ and $\gamma$ are all non-zero then the roots of $p_n$ converge as $n \to \infty$ to the union of the two circles $ |z - a| = 1$ and $|z - b | = 1$.
\end{lemma}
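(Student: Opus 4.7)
The starting idea is to treat $p_n$ as a polynomial of degree two in the two exponential variables $u := (z-a)^n$ and $v := (z-b)^n$, since
\[
p_n(z) \;=\; uv + \alpha u + \beta v + \gamma \;=\; (u+\beta)(v+\alpha) + (\gamma - \alpha\beta).
\]
Each of the four monomials has an exponential order of magnitude determined by whether $|z-a|$ and $|z-b|$ lie above or below $1$, so the two circles $|z-a|=1$ and $|z-b|=1$ are the natural dividing loci.

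For the easier half, that no roots can accumulate away from these two circles, I would fix $\epsilon>0$ and subdivide the set where both $\bigl||z-a|-1\bigr|\geq\epsilon$ and $\bigl||z-b|-1\bigr|\geq\epsilon$ into its four obvious open pieces. In the piece where both $|z-a|<1-\epsilon$ and $|z-b|<1-\epsilon$, the variables $u,v$ tend uniformly to $0$ and $p_n\to\gamma\neq 0$. In the piece where both are greater than $1+\epsilon$, the term $uv$ grows like $(1+\epsilon)^{2n}$ and swamps the other three. In the mixed piece $|z-a|<1-\epsilon<1+\epsilon<|z-b|$, the regrouping $p_n=v(u+\beta)+(\alpha u+\gamma)$ yields $|p_n|\geq |v|(|\beta|-o(1))-O(1)\to\infty$ because $|u+\beta|\to|\beta|\neq 0$; the other mixed piece is symmetric via the dual regrouping $u(v+\alpha)+(\beta v+\gamma)$. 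Hence for every $\epsilon>0$, all zeros of $p_n$ lie in the $\epsilon$-neighbourhood of $\{|z-a|=1\}\cup\{|z-b|=1\}$ once $n$ is sufficiently large.

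For the harder half, that the two circles are filled out by limits of zeros, I would apply Rouch\'e's theorem on sector-of-annulus contours $C_{r,R,\theta_1,\theta_2}$ of the type introduced above, centred at $a$ for the circle $|z-a|=1$ and at $b$ for $|z-b|=1$. Let $z_0$ lie on $|z-a|=1$ with $|z_0-b|>1$, and enclose $z_0$ in such a contour with $r<1<R$ contained in $\{|z-b|\geq 1+\delta\}$. Comparing $p_n$ with $q_n(z):=v(u+\beta)=(z-b)^n\bigl[(z-a)^n+\beta\bigr]$, one has $|p_n-q_n|=|\alpha u+\gamma|=O(1)$ on the contour, while $|q_n|\geq (1+\delta)^n\bigl|(z-a)^n+\beta\bigr|$ can be forced to dominate by choosing the four contour parameters so as to keep $|(z-a)^n+\beta|$ bounded away from zero along the contour. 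Rouch\'e then matches zero counts: the zeros of $q_n$ inside are exactly the $n$-th roots of $-\beta$ translated by $a$, which lie on $|z-a|=|\beta|^{1/n}$ and become uniformly distributed on $|z-a|=1$ as $n\to\infty$. The case $|z_0-b|<1$ is analogous, comparing instead with $\alpha u+\gamma$ whose zeros are the $n$-th roots of $-\gamma/\alpha$ translated by $a$; density on $|z-b|=1$ is obtained by swapping the roles of $a$ and $b$.

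The main obstacle is the behaviour near the at most two intersection points of the circles, which arise when $0<|a-b|<2$. At such a point $u$ and $v$ are both of unit modulus and no single grouping dominates, so the Rouch\'e comparison above breaks down locally. I would circumvent this by excluding arbitrarily small fixed neighbourhoods of the intersection points from the Rouch\'e arguments and then using the degree count $\deg p_n=2n$: the Rouch\'e contributions from the two circles outside these neighbourhoods already account for approximately $n$ zeros each and hence exhaust the zeros of $p_n$. Density at the intersection points themselves follows because they are limits of nearby circle points at which density has already been established. A secondary technical point is that the angular parameters $\theta_1,\theta_2$ of the sector contours may need to depend on $n$ in order to keep the radial segments from passing too close to a zero of $(z-a)^n+\beta$ or $\alpha(z-a)^n+\gamma$, but the flexibility in the contour construction makes this straightforward.
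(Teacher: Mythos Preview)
Your proof follows essentially the same strategy as the paper's: the four-region decomposition for the containment half, Rouch\'e comparisons on sector-of-annulus contours centred at $a$ or $b$ for the density half (with the same comparison functions $\alpha(z-a)^n+\gamma$ when $|z_0-b|<1$ and, up to the factor $(z-b)^n$, $(z-a)^n+\beta$ when $|z_0-b|>1$), and a diagonal-sequence argument at the intersection points.

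One correction is needed in the case $|z_0-b|>1$: with \emph{fixed} radii $r<1<R$ your bound $|p_n-q_n|=|\alpha u+\gamma|=O(1)$ is false on the outer arc, where $|u|=R^n\to\infty$. The Rouch\'e comparison still goes through, since $|q_n|\gtrsim (1+\delta)^n R^n$ dominates $|\alpha|R^n$ there, but the stated estimate is wrong. The paper sidesteps this either by dividing through by $(z-b)^n$ before comparing, or equivalently by taking $n$-dependent radii $r_n,R_n\to 1$ so that $|u|$ remains bounded on the contour; you should adopt one of these fixes. The degree-counting remark near the intersection points is unnecessary for the statement being proved---your final sentence (intersection points are limits of non-intersection circle points, where density is already established) is exactly the paper's diagonal argument and suffices on its own.
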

\begin{proof}
Given $\epsilon > 0$ put
$$S_\epsilon = \{ z : |z-a| \geq 1 + \epsilon\} \cap \{ z : |z-b| \leq 1 - \epsilon \}.$$
If $z \in S_\epsilon$ then
$$ \frac{p_n(z)}{(z-a)^n} - \alpha = (z-b)^n + \beta \left( \frac{z-b}{z-a} \right)^n + \frac{\gamma}{(z-a)^n}.$$
Therefore $$ \left| \frac{p_n(z)}{(z-a)^n} - \alpha \right| \leq \frac{|\alpha|}{2} $$ for all large enough $n$, uniformly with respect to $z \in S_\epsilon$. This implies that $p_n(z) \neq 0$. By carrying out three other similar calculations we find that every root of $p_n$ lies in the $\epsilon$-neighbourhood $N_\epsilon$ of the union of the two circles provided $n$ is large enough.

We now show that if $z_0$ lies in the union of the two circles then there exists a sequence $z_n$ of complex numbers such that $z_n \to z_0$ and  $p_n(z_n) = 0$ for all $n$. There are essentially two cases to consider, depending on whether $z_0$ lies on one or both of the circles $|z-a|=1$ and $|z-b|=1$.

Firstly, suppose $z_0$ lies on only one of the circles. For the sake of definiteness we suppose that $|z_0 - a| = 1$ and $|z_0 - b| <1$. Let $U$ be a small open neighbourhood of $z_0$ so that $ | (z-a) (z-b) | < c < 1$ and $ |z-b| < c < 1$ for all $z$ in $U$. This implies that $q_n(z) = (z-a)^n (z-b)^n + \beta (z-b)^n$ is uniformly exponentially small for $z$ in $U$. This suggests that inside $U$ the roots of
$$ p_n(z) = \alpha (z - a)^n + \gamma + q_n(z)$$
should be close to the roots of $f_n(z) = \alpha (z-a)^n + \gamma$.

Suppose $z_0 = e^{i \phi_0} + a$. Let $w_n = | \gamma / \alpha |^{1/n} e^{i \phi_n} + a $ be a root of $f_n(z)$ which is closest to $z_0$. Since $| \gamma / \alpha |^{1/n} \to 1$ and $\phi_n \to \phi_0$ we see that $w_n \to z_0$. Let $C_n$ be the contour $C_{ r_n, R_n, \theta_{1,n}, \theta_{2, n}} + a$ where $r_n = ( \frac{1}{2} | \gamma / \alpha|)^{1/n}$, $R_n = ( \frac{3}{2} | \gamma / \alpha |) ^ {1/n}$, $\theta_{1,n} = \phi_n - \pi/2n$ and $\theta_{2,n} = \phi_n + \pi / 2n$. The region enclosed by $C_n$ contains $w_n$.

Let $M = |\gamma| / 2$. We show that $z \in C_n$ implies $|f_n(z)| \geq M$, where we note the bound is independent of $n$. Since $C_n = \{ \gamma_1, \gamma_2, \gamma_3, \gamma_4\}$ there are four cases to consider. If $z \in \gamma_1$ then $ z = \rho e^{i \theta_{1,n}} + a$ where $ r_n \leq \rho \leq R_n$ and so
\begin{eqnarray*}
|f_n(z)| & = &  | \alpha \rho^n e^{i(n\phi_n - \pi/2)} + \gamma| \\
        & = & | \alpha \rho^n e^{i(n\phi_n - \pi/2)} + \gamma - (\alpha |\gamma / \alpha| e^{i n \phi_n} + \gamma)| \\
        & = & |\alpha| |i(-\rho^n) - |\gamma / \alpha|| \\
        & \geq & |\gamma|
\end{eqnarray*}
where the second equality uses $\alpha |\gamma / \alpha| e^{i n \phi_n} + \gamma = f(w_n) = 0$. The case $z \in \gamma_3$ is similar. If $z \in \gamma_2$ then $z = R_n e^{i \theta} + a$ for some $\theta$ and so
\begin{eqnarray*}
|f_n(z)| & = & \left | \alpha \frac{3}{2} | \gamma / \alpha| e^{in\theta} + \gamma \right | \\
        & \geq & \frac{3}{2}|\gamma| - |\gamma| \\
        & = & \frac{1}{2} |\gamma|
\end{eqnarray*}
The case $z \in \gamma_4$ is similar. Therefore $|f_n(z)| \geq M$ for all $z \in C_n$.

By taking $N$ large enough we can ensure that for all $n > N$ that $C_n$ is completely contained in $U$ and that $|q_n(z)| < M/2$ for all $z$ in $U$. Therefore, for $n > N$ and $z \in C_n$ we have
$$ |q_n(z)| < M/2 < |f_n(z)| $$
Hence by Rouche's theorem $f_n(z)$ and $f_n(z) + q_n(z)$ have the same number of roots inside $C_n$. Therefore $p_n(z)$ has a root $z_n$ inside $C_n$. Furthermore, $z_n \to z_0$ because
$$ r_n < |z_n| < R_n $$
$$ \theta_{1,n} < \arg z_n < \theta_{2,n} $$
and $r_n, R_n \to 1$ and $\theta_{1,n}, \theta_{2,n} \to \phi_0$.

The remaining cases for when $z_0$ lies on one of the circles are very similar, but may involve an extra step. For example, if $|z_0 - a| = 1$ and $|z_0 - b| > 1$ we apply the above analysis instead to the equation $p_n(z)/(z-b)^n = 0$ with
 $$f_n(z) = (z - a)^n + \beta \ \ {\rm and} \ \ q_n(z) = \alpha \left(\frac{z-a}{z-b} \right)^n + \gamma \left(\frac{1}{z-b} \right)^n $$

The final case is when $z_0$ lies on both circles, that is $|z_0 - a| =1$ and $|z_0 - b|=1$. We first choose a sequence $\{z_0^{(m)}\}_m$ such that $z_0^{(m)} \to z_0$ with each $z_0^{(m)}$ belonging to only one of the circles. By the previous case, for each $m$ there exists a sequence a sequence $\{z_n^{(m)}\}_n$ such that $p_n(z_n^{(m)})=0$ and $z_n^{(m)} \to z_0^{(m)}$. There exist positive integers $M_1 < M_2 < M_3 < \dots $ so that for each $m$ we have $$ |z_n^{(m)} - z_0^{(m)} | < \frac{1}{2^m} \ \ {\rm for} \ \ n \geq M_m.$$ The sequence $\{z_n\}$ defined by putting $z_n = z_n^{(m)}$ when $M_m \leq n < M_{m+1}$ satisfies $p_n(z_n) = 0$ and $z_n \to z_0$, as required.
\end{proof}

If one of the coefficients  $\alpha$ or $\beta$ vanishes then the form of the spectrum changes.  This is illustrated in Figures~\ref{fig:poly2a} and \ref{fig:poly2b} and proved in Lemma \ref{limset2}.

\begin{lemma}\label{limset2} Assume that $\alpha \neq 0$, $\beta = 0$, $\gamma \neq 0$ and $b = -a$, and that
$$p_n(z) = (z-a)^n(z+a)^n + \alpha (z-a)^n + \gamma.$$
 If $0 < a < 1$ then the roots converge as $n \to \infty$ to the union of two arcs within the circles $|z-a|=1$ and $|z+a|=1$ together with an arc in the fourth order curve $|(z-a)(z+a)|=1$. The ends of all three arcs are at $\pm i \sqrt{1 - a^2}$. However, if $a>1$ then the roots of $p_n$ converge to the entire circle $|z+a|=1$ together with the closed fourth order curve $|(z-a)(z+a)| =1$ contained in $\{z  : \Re(z) > 0\}$.
\end{lemma}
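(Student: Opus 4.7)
The plan is to follow the two-stage template of Lemma~\ref{limset}, first excluding zeros outside the claimed limit set and then producing a convergent zero-sequence for each point of it. Write $T_1(z)=(z-a)^n(z+a)^n$, $T_2(z)=\alpha(z-a)^n$, $T_3=\gamma$. A zero of $p_n=T_1+T_2+T_3$ can only accumulate at a $z_0$ where two of $|T_1|,|T_2|,|T_3|$ are of comparable size and the third is strictly smaller. The three resulting balance loci are $|z+a|=1$ (from $|T_1|\approx|T_2|$), $|z-a|=1$ (from $|T_2|\approx|T_3|$), and the Cassini oval $|z^2-a^2|=1$ (from $|T_1|\approx|T_3|$), and the suppressed third term further restricts each to a sub-arc: $|z+a|<1$ on the first circle, $|z-a|>1$ on the second, and $|z+a|>1$ on the Cassini oval. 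For $0<a<1$ each sub-arc runs between the common intersection points $\pm i\sqrt{1-a^2}$ of the three curves; for $a>1$ the first sub-arc is empty, the second is the entire circle $|z+a|=1$, and the third is the Cassini component enclosing $+a$.

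For the first stage I would partition $\C$ by the comparisons $|z\pm a|\gtrless 1\pm\epsilon$; in each of the resulting open regions a direct inspection shows one of $|T_1|,|T_2|,|T_3|$ exceeds the other two by an exponentially large factor, so $p_n\ne 0$ for large $n$. For the second-stage Rouch\'e arguments on the two circular arcs, the proof of Lemma~\ref{limset} transfers almost verbatim. On the arc of $|z-a|=1$ with $|z+a|<1-\epsilon$, apply Rouch\'e with $f_n(z)=\alpha(z-a)^n+\gamma$ and $q_n(z)=(z-a)^n(z+a)^n$ on a sector-of-annulus contour $C_{r_n,R_n,\theta_{1,n},\theta_{2,n}}+a$ enclosing the chosen root of $f_n$. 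On the arc of $|z+a|=1$ with $|z-a|>1+\epsilon$ (which for $a>1$ is the whole circle), divide $p_n$ through by $(z-a)^n$ and run the analogous argument with $f_n(z)=(z+a)^n+\alpha$ and $q_n(z)=\gamma(z-a)^{-n}$, now on a sector-of-annulus contour centred at $-a$.

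The only genuinely new feature is the Cassini arc, because $|(z-a)(z+a)|=1$ is not a circle. Here Lemma~\ref{conformal} is essential: the map $g(z)=z^2-a^2$ is conformal on a small neighbourhood of any $z_0$ on the arc (such $z_0$ is never $0$, since $|0-a^2|=a^2\ne 1$ in either case of the lemma), and under $g$ the equation $f_n(z)=(z^2-a^2)^n+\gamma=0$ becomes $w^n+\gamma=0$, whose roots lie on $|w|=|\gamma|^{1/n}\to 1$. I would enclose the root of $w^n+\gamma$ closest to $w_0=g(z_0)$ in a standard sector-of-annulus contour in the $w$-plane and pull it back by $g^{-1}$ to a contour around $z_0$. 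The perturbation $q_n(z)=\alpha(z-a)^n$ is uniformly exponentially small on this pull-back because $|z-a|<1-\epsilon$ throughout a small enough neighbourhood (forced by $|z+a|>1+\epsilon$ together with $|(z-a)(z+a)|\approx 1$), so Rouch\'e applied in the $w$-plane combined with Lemma~\ref{conformal} yields a zero of $p_n$ inside. The common endpoints $\pm i\sqrt{1-a^2}$ (in the case $0<a<1$), where two balances hold simultaneously and no single Rouch\'e argument applies, are handled by the diagonal-subsequence trick of the last paragraph of Lemma~\ref{limset}. The main obstacle is this Cassini step: verifying that the pulled-back contour is still geometrically suitable and that the lower bound on $|f_n|$ transfers uniformly in $n$ across the nonlinear change of variables.
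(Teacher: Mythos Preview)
Your approach is essentially the paper's: identify the three dominance regions, exclude zeros there, and on each arc run a Rouch\'e argument against the two-term model $f_n$, using the conformal change of variable $w=z^2-a^2$ for the Cassini arc exactly as the paper does via the inverse mapping theorem on $f=f_1/f_3$. Two minor points: in your first paragraph the sub-arc restrictions on the two circles are transposed (on $|z+a|=1$ the suppressed term forces $|z-a|>1$, and on $|z-a|=1$ it forces $|z+a|<1$; you get this right in the detailed Rouch\'e paragraphs), and similarly the $a>1$ clause has the roles of the two circles swapped. Your closing worry about the pulled-back contour is not an obstacle: as in the paper, you never need to analyse the $z$-plane contour---Rouch\'e is carried out entirely in the $w$-plane on the standard sector-of-annulus $C_n$, and Lemma~\ref{conformal} then transports the resulting zero back to $z$; the only thing to check is that for large $n$ the contour $C_n$ and its interior lie in the image $W=g(V)$, which is automatic since the $C_n$ shrink to $w_0$.
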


\begin{figure}[h!]
\centering
\scalebox{0.8}{\includegraphics{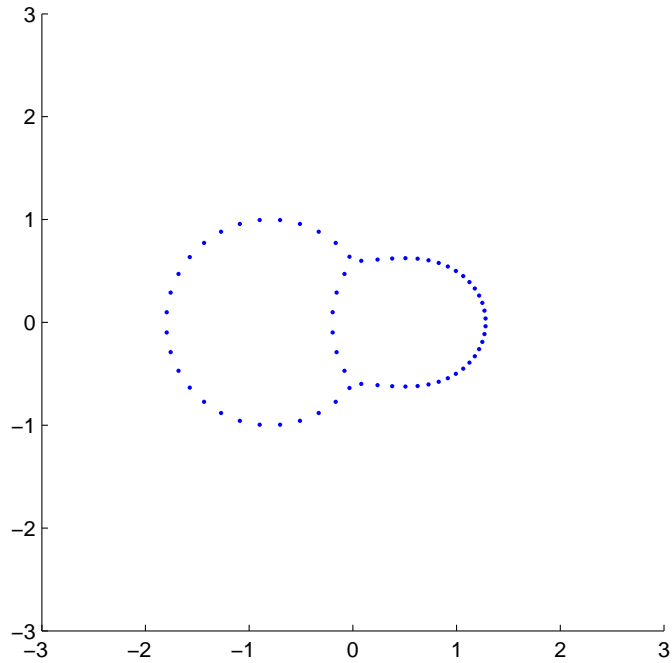}}
\caption{The constant $a$ in $p_n(z)$ of Lemma~\ref{limset2} satisfies $0<a<1$. }
\label{fig:poly2a}
\end{figure}

\begin{proof} First suppose that $0 < a < 1$. Let $a_1 = 1$, $a_2 = \alpha$, $a_3 = \gamma$ and
\begin{eqnarray*}
f_1(z) & = & (z-a)(z+a) \\
f_2(z) & = & (z-a) \\
f_3(z) & = & 1
\end{eqnarray*}
so that $$p_n(z) = a_1 f_1(z)^n + a_2 f_2(z)^n + a_3 f_3(z)^n.$$
We will prove that as $n \to \infty$ the roots of $p_n$ converge to the union of the following subsets of $\C$, which we will denote by $L$.
\begin{eqnarray*}
L_{1,2} &=& \{ z \in \C: |f_1(z)| = |f_2(z)| > |f_3(z)| \} = \{ z : |z+a| =1, \ \Re(z) < 0 \} \\
L_{1,3} &=& \{ z \in \C: |f_1(z)| = |f_3(z)| > |f_2(z)| \} =  \{ z : |(z-a)(z+a)|=1, \ \Re(z) > 0 \} \\
L_{2,3} &=& \{ z \in \C: |f_2(z)| = |f_3(z)| > |f_1(z)| \} =  \{ z: |z-a| = 1, \ \Re(z) < 0 \} \\
L_{\rm end} &=&  \{ \pm i \sqrt{1 - a^2} \}
\end{eqnarray*}
where $L_{\rm end}$ is the set of endpoints of the arcs $L_{1,2}$, $L_{1,3}$ and $L_{2,3}$. Let $L_\epsilon$ be the $\epsilon$-neighbourhood of $L$. We will show that $\C \backslash L_\epsilon$ is contained in the union of finitely many sets of the form
$$B_{r,\delta} = \{ z \in \C : |f_r(z)(1-\delta)| > \max\{|f_s(z)| : s \neq r \}.$$
Since $L_\epsilon$ is bounded and since $f_1(z)$ dominates both $f_2(z)$ and $f_3(z)$ in absolute value as $|z| \to \infty$, there exists $R>0$ and  $1 > \delta_1 >0$ such that $L_\epsilon \subset B(0,R)$ and $z \notin B(0,R)$ implies
$$ |f_1(z)(1-\delta_1)| > \max \{ |f_2(z)|, |f_3(z)| \}. $$
Therefore $\C \backslash B(0,R) \subseteq B_{1,\delta_1}$. Let $K = \bar{B}(0,R) \backslash L_\epsilon$. For each $z \in K$ there is an $r_z \in \{1,2,3\}$ and $1 > \delta_z > 0$ such that
$ z \in B_{r_z, \delta_z}$. Since each $B_{r_z, \delta_z}$ is open and $K$ is compact, we can cover $K$ with finitely many such $B_{r_z, \delta_z}$. In fact, if we take $\delta$ small enough we conclude that
$$ \C \backslash L_\epsilon \subseteq \bigcup_{r=1}^3 B_{r,\delta}.$$

Inside the set $B_{r, \delta}$ the term $a_r f_r(z)^n$ dominates the remaining terms of $p_n(z)$ so that for large enough $n$ we have $p_n(z) \neq 0$ for all $z \in B_{r,\delta}$. More precisely, if $z \in B_{r,\delta}$ then $f_r(z) \neq 0$ and

\begin{eqnarray*}
\left | \frac{p_n(z)}{a_r f_r(z)^n} \right | & = &\left |1 + \sum_{s \neq r} (a_s/a_r) \left (\frac{f_s(z)}{f_r(z)} \right)^n \right | \\
        & \geq & 1 - \sum_{s \neq r} |a_s/a_r| (1 - \delta)^n \\
        & > & 0
\end{eqnarray*}
for $n$ large enough, where we note that the bound is uniform for $z \in B_{r,\delta}$. Consequently $p_n(z) \neq 0$ for all $z \in B_{r, \delta}$ for $n$ large enough. Since $\C \backslash L_\epsilon$ is contained in the union of finitely many such $B_{r, \delta}$ all the zeros of $p_n(z)$ are in $L_\epsilon$ for $n$ large enough, as required.

We now show that if $z_0$ is in $L$ then there is a sequence $\{z_n\}$ such that $p_n(z_n) = 0$ and $z_n \to z_0$. There are number of cases to consider, but the proofs are similar. We prove the case $z_0 \in L_{1,3}$ in a manner which emphasises the general technique. Since $z_0 \in L_{1,3}$ we have
$$ |f_1(z_0)| = |f_3(z_0)| > |f_2(z_0)|$$
By continuity there is an open neighbourhood $U$ of $z_0$ such that $|f_2(z)/f_3(z)| < c < 1$ for all $z \in U$.  We want to solve $p_n(z) = 0$ for $z$ close to $z_0$, that is
$$ a_1 f_1(z)^n + a_2 f_2(z)^n + a_3 f_3(z)^n = 0. $$
After dividing both sides by $a_1$ and $f_3(z)^n$ this is equivalent to solving
$$ {\left ( \frac{f_1(z)}{f_3(z)} \right)}^n + \frac{a_3}{a_1} + \frac{a_2}{a_1}{ \left ( \frac{f_2(z)}{f_3(z)} \right)^n} =0$$
Putting $f(z) = f_1(z)/f_3(z)$, $a = a_3/a_1$ and $g_n(z) = (a_2/a_1)(f_2(z)/f_3(z))^n$ we want to solve
\begin{equation}
f(z)^n + a + g_n(z) = 0 \label{eq1}
\end{equation}
for $z$ near $z_0$. We note that $f'(z_0) \neq 0$,  $f(z_0) = e^{i \theta_0}$ for some $\theta_0$ and $g_n(z)$ is uniformly exponentially small in $U$ because $|g_n(z)| \leq |a_2/a_1| c^n$ where $c<1$.

Since $f'(z_0) \neq 0$ the inverse mapping theorem implies that there exist open sets $V$ and $W$ such that $z_0 \in V$, $f(z_0) \in W$ and $f : V \to W$ is conformal with analytic inverse $g : W \to V$. By reducing $V$ if necessary we may assume that $V \subseteq U$.

Under the change of variable $w = f(z)$ equation (\ref{eq1}) becomes
$$ w^n + a + \tilde{g_n}(w) = 0 $$
where $\tilde{g_n}(w) = g_n(g(w))$. We want to solve this equation for $w$ near $f(z_0)$. Since $\tilde{g_n}(w)$ is uniformly exponentially small in $W$ as $n \to \infty$ we expect the solutions to be close to those of
\begin{equation}
w^n + a = 0 \label{eq2}
\end{equation}
by Rouche's theorem.  Let $w_n = |a|^{1/n} e^{i \phi_n} $ be a solution of (\ref{eq2}) with argument $\phi_n$ closest to $\theta_0$. This ensures $w_n \to f(z_0)$ as $n \to \infty$. Let $C_n$ be the contour $C_{r_n, R_n, \theta_{1,n}, \theta_{2,n}}$ with $r_n = (\frac{1}{2}|a|)^{1/n}$, $R_n = (\frac{3}{2}|a|)^{1/n}$, $\theta_{1,n} = \phi_n - \frac{\pi}{2n}$ and $\theta_{2,n} = \phi_n + \frac{\pi}{2n}$. We note that $w_n$ lies in the interior of $C_n$. Routine estimates show that $w \in C_n$ implies
$$ |w^n + a| \geq |a|/2 $$
for all $n$. For $n$ large enough we have $C_n$ and its interior completely contained in $W$. Since $\tilde{g_n}(w) \to 0$ uniformly as $n \to \infty$ for $w \in W$, for $n$ large enough we have
$$ |\tilde{g_n}(w)| < |a|/2 \leq |w^n + a|$$
for all $w \in C_n$. Therefore by Rouche's theorem  $w^n + a + \tilde{g_n}(w)=0$ has a solution $u_n$  inside $C_n$. Let $z_n = g(u_n)$. The sequence $\{z_n\}$ satisfies $p_n(z_n)=0$ and $z_n \to z_0$, as required.

\begin{figure}[h!]
\centering
\scalebox{0.8}{\includegraphics{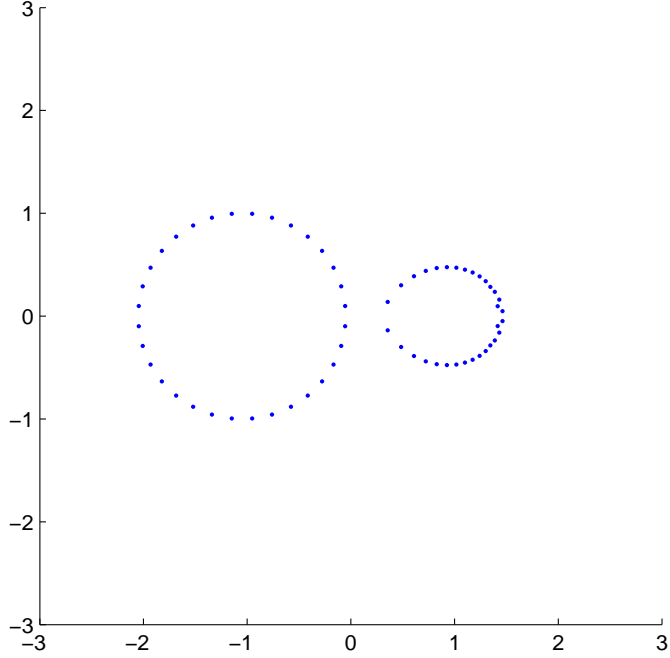}}
\caption{The constant $a$ in $p_n(z)$ of Lemma~\ref{limset2} satisfies $a>1$.}\label{fig:poly2b}
\end{figure}

The case $a>1$ is handled very similarly, with the only difference being the set $L$ which is now the union of the arcs
\begin{eqnarray*}
L_{1,2} &=& \{ z \in \C: |f_1(z)| = |f_2(z)| > |f_3(z)| \} = \{ z : |z+a| =1\} \\
L_{1,3} &=& \{ z \in \C: |f_1(z)| = |f_3(z)| > |f_2(z)| \} =  \{ z : |(z-a)(z+a)|=1, \ \Re(z) > 0 \} \\
L_{2,3} &=& \{ z \in \C: |f_2(z)| = |f_3(z)| > |f_1(z)| \} =  \emptyset \\
L_{\rm end} &=& \emptyset
\end{eqnarray*}
where we note that $L_{1,3}$ is a closed fourth order curve which is actually completely contained in the open disk $B(a, 1)$. The proof now continues as before.
\end{proof}

Lemma~\ref{asympcircle} is not covered by the general analysis and may be viewed as a degenerate case.

\begin{lemma}\label{asympcircle} Let $p$ be a polynomial in two variables of the form
$$ p(z,w) = a_m(z)w^m + a_{m-1}(z)w^{m-1} + \dots + a_1(z)w + a_0(z).$$
Let $\epsilon > 0$. As $n\to \infty$ the solutions of $p(z, z^n) = 0$ that satisfy $ |z| < 1 - \epsilon$ converge to the solutions of $a_0(z) = 0$ satisfying $ |z| < 1 - \epsilon$, while the solutions of $p(z, z^n) = 0$ that satisfy $|z| > 1 + \epsilon$ converge to the solutions of $a_m(z) = 0$ satisfying $|z| > 1 + \epsilon$.

Assuming that neither $a_0(z) = 0$ nor $a_m(z) = 0$ has a solution of modulus $1$, the remaining solutions of $p(z,z^n) = 0$ are asymptotically uniformly distributed around the circle $ |z| = 1$ in the sense that for all $\alpha$ and $\beta$ such that $ \beta - \alpha \leq 2 \pi $ and for all $ \delta > 0$ we have
$$ \frac{\# \{ z : p(z, z^n) = 0, 1 - \delta < |z| < 1 + \delta, \alpha < \arg z < \beta \}}
{\# \{ z : p(z, z^n) = 0 \}}
 \to \frac {\beta - \alpha} {2\pi} \ \  {\rm as} \ \ n \to \infty$$
where we count the roots with multiplicity.
\end{lemma}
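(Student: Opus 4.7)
The plan is to handle the three assertions of the lemma separately. The first two follow from Hurwitz's theorem. On the compact set $\{|z| \leq 1 - \epsilon\}$ the terms $a_k(z)z^{nk}$ with $k \geq 1$ tend to $0$ uniformly, so $p(z,z^n) \to a_0(z)$ locally uniformly, and Hurwitz implies that the zeros of $p(z,z^n)$ in $\{|z| < 1 - \epsilon\}$ converge to those of $a_0$ there. The symmetric statement for $\{|z| > 1+\epsilon\}$ follows by applying Hurwitz to $z^{-nm}p(z,z^n) \to a_m(z)$, using that $z^{n(k-m)} \to 0$ uniformly for $k < m$ on this region.

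For the uniform distribution statement I would apply the argument principle to the sector contour $C_n := C_{1-\delta,1+\delta,\alpha,\beta}$, choosing $\delta \in (0,\epsilon)$ so small that neither $a_0$ nor $a_m$ vanishes in the closed annulus $\{1-\delta \leq |z| \leq 1+\delta\}$, and perturbing $\alpha,\beta$ slightly if necessary so that $p(z,z^n) \neq 0$ on $C_n$. The number $N_n$ of zeros enclosed equals $(2\pi)^{-1} \Delta_{C_n}\arg p(z,z^n)$, and I would estimate the four sides in turn. On the outer arc $\gamma_2$ the term $a_m(z)z^{nm}$ dominates every other term exponentially, so $\Delta_{\gamma_2}\arg p(z,z^n) = nm(\beta - \alpha) + \Delta_{\gamma_2}\arg a_m(z) + o(1) = nm(\beta - \alpha) + O(1)$. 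On the inner arc $\gamma_4$ the term $a_0(z)$ dominates and the argument change is $O(1)$.

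The main obstacle is showing that the contribution of each radial segment is $O(1)$ uniformly in $n$. The key geometric observation is that $\arg z$ is constant along a radial ray, so $\arg z^{nm}$ does not vary at all: the apparent threat from the fast-oscillating factor $z^{nm}$ never materialises on the radial sides. To formalise this for $\gamma_1$, parametrised by $z = te^{i\alpha}$ with $t\in[1-\delta,1+\delta]$, I would split the interval into $[1-\delta,1-K/n]$, the transition band $[1-K/n,1+K/n]$, and $[1+K/n,1+\delta]$ for a large fixed $K$. On the outer and inner pieces a single term dominates exponentially and $\arg p(z,z^n)$ tracks $\arg a_m(te^{i\alpha})$ or $\arg a_0(te^{i\alpha})$ to within $o(1)$, contributing $O(1)$. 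On the narrow transition band, substituting $t = 1 + u/n$ gives $t^{nk} = e^{uk}(1 + O(1/n))$ and $a_k(te^{i\alpha}) = a_k(e^{i\alpha}) + O(1/n)$, so $p(te^{i\alpha},(te^{i\alpha})^n) = Q(e^u)(1 + O(1/n))$ uniformly on $u \in [-K,K]$, where
$$Q(s) = \sum_{k=0}^{m} a_k(e^{i\alpha})\, e^{ink\alpha}\, s^k$$
is a polynomial of degree $m$. Factoring $Q(s) = c\prod_j (s - s_j)$ and noting that the total argument change of each factor $s - s_j$ as $s$ ranges over $(0,\infty)$ is bounded by $\pi$ in absolute value (provided $s_j \notin (0,\infty)$, which can be arranged by a generic choice of $\alpha$), the total argument change of $Q(e^u)$ on any sub-range of the positive real axis is at most $m\pi$, independent of $n$.

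Assembling the four estimates yields $\Delta_{C_n}\arg p(z,z^n) = nm(\beta - \alpha) + O(1)$, so $N_n = (\beta-\alpha)nm/(2\pi) + O(1)$. The total number of zeros of $p(z,z^n)$ counted with multiplicity equals $nm + \deg a_m = nm + O(1)$, so the ratio in the lemma converges to $(\beta-\alpha)/(2\pi)$. The genericity restrictions on $\alpha,\beta$ are harmless, since moving them slightly changes both numerator and denominator by only $O(1)$, which does not affect the limit.
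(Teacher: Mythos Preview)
Your approach via the argument principle on a sector contour is genuinely different from the paper's. The paper works locally: at each $z_0$ on the unit circle it solves $p(z_0,w)=0$, obtaining roots $w_1,\dots,w_k$ with multiplicities $m_i$; it then pulls back small annular-sector contours around each $w_i$ under $z\mapsto z^n$ to obtain $n$ equally spaced small contours near $|z|=1$, and uses Rouch\'e (comparing $p(z,z^n)$ with $p(z_0,z^n)$) to show each small contour encloses exactly $m_i$ zeros of $p(z,z^n)$. Covering the unit circle by finitely many such neighbourhoods and observing that the total root count is $nm+O(1)$ forces all but $O(1)$ roots to lie inside these small contours, and equidistribution follows by counting contours in the sector. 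Your global computation is more direct, and the observation that $\arg z^{nm}$ is constant along a radial ray is exactly the right insight for the radial sides. Your Hurwitz treatment of the first two assertions is equivalent to the paper's explicit Rouch\'e argument.

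There is, however, a genuine gap in the transition-band estimate. The polynomial $Q(s)=\sum_k a_k(e^{i\alpha})e^{ink\alpha}s^k$ depends on $n$ through the phases $e^{ink\alpha}$, so its roots $s_j=s_j(n)$ move with $n$; there is no evident reason a \emph{single} choice of $\alpha$ keeps all $s_j(n)$ off $(0,\infty)$ for every $n$. Relatedly, writing $p=Q(e^u)(1+O(1/n))$ presupposes $|Q(e^u)|$ is bounded below uniformly in $n$ on $[-K,K]$, which fails precisely when some $s_j(n)$ approaches that interval; the correct approximation is additive, $p=Q(e^u)+O(1/n)$. One repair: away from $\epsilon$-neighbourhoods of the at most $m$ roots of $Q_n$ in $[e^{-K},e^K]$, the factorisation and the uniform bounds on the coefficients give $|Q_n(e^u)|\geq c\,\epsilon^m$ independent of $n$, so the multiplicative form is valid there and the $m\pi$ bound transfers; on each $\epsilon$-neighbourhood the curve $u\mapsto p$ is a short near-linear arc missing $0$ (since you have already arranged $p\neq 0$ on the ray), contributing at most $\pi$. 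Alternatively, let $\alpha_n,\beta_n$ depend on $n$ within $O(1/n)$ of $\alpha,\beta$ and sandwich the desired count between the resulting argument-principle counts; the $O(1/n)$ angular shift changes $nm(\beta_n-\alpha_n)$ by $O(1)$, which is enough. Your final sentence hints at this but as stated is circular, since ``moving $\alpha$ slightly changes the numerator by $O(1)$'' already presumes an equidistribution-type bound.
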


\begin{proof}

We note that $p(z, z^n)$ has $nm + d$ roots counted with multiplicity, where $d$ is the degree of $a_m(z)$. We begin by proving the last statement. Fix an arbitrary point $z_0$ of modulus $1$. We first prove that the result is true in a neighborhood of $z_0$. The equation $p(z_0, w) = 0$ has exactly $m$ solutions counted with multiplicity because $a_m(z_0) \neq 0$, and each solution is non zero because $p(z_0, 0) = a_0(z_0) \neq 0$. Let $w_1, \dots, w_k$ be the distinct solutions and let $m_1, \dots, m_k$ be the corresponding multiplicities so that $m_1 + \dots + m_k = m$. The $nm$ solutions of $p(z_0, z^n)=0$ are precisely the $n^{\rm th}$ roots of the $w_i$, that is
$$ \{ z : p(z_0, z^n) = 0 \} = \bigcup_{i=1}^{k} \{ z : z^n = w_i \}$$
Let $z_i$ be an $n^{\rm th}$ root of $w_i$. We show that the solution $z_i$ of $p(z_0, z^n) = 0$ has multiplicity $m_i$. Substituting $z^n$ for $w$ and $z_i^n$ for $w_i$ in
$$ p(z_0, w) = a_{m_i}(w - w_i)^{m_i} + a_{m+1}(w - w_i)^{m_i + 1} + \dots \hspace{10mm}  (a_{m_i} \neq 0)$$
yields
\begin{eqnarray*}
p(z_0, z^n) &=& a_{m_i}(z^n - z_i^n)^{m_i} + a_{m+1}(z^n - z_i^n)^{m_i +1} + \dots \\
          &= & (z - z_i)^{m_i}(z^{n -1} + z^{n -2}z_i + \dots + z_i^{n -1})^{m_i}(a_{m_i} + a_{m+1}(z^n - z_i^n) + \dots) \\
          &=& (z - z_i)^{m_i} g(z)
\end{eqnarray*}
where $g(z)$ is analytic and satisfies $g(z_i) \neq 0$, and so the root $z_i$ has multiplicity $m_i$.

Let $C_1, \dots, C_k$ be contours of the form $C_{r,R,\theta_1, \theta_2}$ such that the regions enclosed by the $C_i$ are disjoint and such that $w_i$ lies inside the region enclosed by $C_i$. The preimage of $C_i$ under the map $z \mapsto z^n$ consists of $n$ disjoint contours of the form $C_{r,R,\theta_1, \theta_2}$ which are equally spaced around the unit circle in the sense that they may be labelled $C_i^{1,n}, \dots, C_i^{n,n}$ so that
$$ C_i^{j,n} = \zeta^{ j-1} C_i^{1,n} \ \  {\rm for} \ \  j = 1,\dots, n \ \ {\rm where} \ \ \zeta = e^{i 2\pi / n}$$
where multiplication by $\zeta$ effectively rotates the contour by an angle of $2\pi / n$.
 As $n \to \infty$ the radii of the preimage contours converge to $1$ and their angular sweep converge to $0$. We may label the $n^{\rm th}$ roots of $w_i$ as $z_i^{1, n}, \dots, z_i^{n,n}$ so that $z_i^{j,n}$ is inside the contour $C_i^{j,n}$.

 Let $C = \cup_{i=1}^{k} C_i$. If $w \in C$ then $p(z_0, w) \neq 0$ since the only zeros of this polynomial are the $w_i$. Since $C$ is compact there is a constant $M_1 > 0$ such that $|p(z_0, w)| > M_1$ for all $w \in C$. Note that if $z \in C_i^{j, n}$ then $z^n$ is on $C$ and so $|p(z_0, z^n)| > M_1$. Let $$M_2 = \max \{ |w^k| : w \in C, \ 0 \leq k \leq m \} > 0$$

The continuity of the polynomials $a_i(z)$ imply there exists an open neighborhood of $z_0$, say $A$, such that $$ |a_i(z) - a_i(z_0)| \leq \frac {M_1} {2(m+1)M_2} \ {\rm for} \ 0 \leq i \leq m \ {\rm and} \ z \in A$$
By reducing $A$ if necessary, we may assume that $$A = \{ z : s < |z| < S, \  \phi_1 < \arg z < \phi_2 \}$$

Inside the contour $C_i^{j, n}$ the equation $p(z_0, z^n)=0$ has precisely $m_i$ solutions, and these are all at the point $z_i^{j,n}$. If the contour $C_i^{j,n}$ lies completely inside the region $A$ and if $z \in C_i^{j,n}$ then

\begin{eqnarray*}
|p(z_0, z^n) - p(z, z^n)| & \leq & |a_m(z_0) - a_m(z)||z^{nm}| + \dots + |a_1(z_0) - a_1(z)||z^n| + |a_0(z_0) - a_0(z)| \\
                & \leq & (m+1) \frac {M_1}{2(m+1)M_2} M_2 \\
                & < & M_1 \\
                & \leq & |p(z, z^n)| \\
\end{eqnarray*}

Therefore by Rouche's theorem the equation $p(z, z^n) = 0$ has precisely $m_i$ solutions inside the contour $C_i^{j,n}$. Since the angle between successive contours $C_i^{j,n}$ is $2 \pi / n$ we conclude $$ \#\{j: C_i^{j, n} \subset A\} = \frac{n(\phi_2 - \phi_1)}{2\pi} + O(1)$$
as $n \to \infty$ where the error term $O(1)$ is actually $\leq 2$ for all large enough  $n$. Consequently the number of solutions of $p(z, z^n) = 0$ that lie in $A$ is at least
$$ \sum_{i=1}^{k} m_i \times  \#\{j: C_i^{j, n} \subset A\}   = \frac{nm(\phi_2 - \phi_1)}{2\pi} + O(1)$$

Since the unit circle $\{ z : |z|=1 \}$ is compact we may cover it with finitely many sets of the form $A$, say $A_1, \dots,A_N$, such that in each $A_i$ we have the above lower bound on the number of solutions of $p(z,z^n)=0$ in $A_i$. After trimming these sets and relabeling we may assume that for $i = 1, \dots,N$ that
$$ A_i = \{ z : 1 - \delta_0 < |z| < 1 + \delta_0, \  \phi_{i-1} < \arg z < \phi_i \} $$
where $\delta_0 < \delta$ and $\phi_0 < \phi_1 < \dots < \phi_N$ with $\phi_N - \phi_0 = 2\pi$. For $n$ large enough we conclude that the number of solutions of $p(z,z^n) = 0$ in the annulus $\{ z : 1-\delta_0 < |z| < 1 + \delta_0 \}$ is at least
$$ \sum_{i=1}^N \frac {nm(\phi_i - \phi_{i-1})}{2\pi}+O(1) = nm + O(1) $$
Since $p(z, z^n) = 0$ has only $nm+d$ solutions we conclude that all but $O(1)$ of these solutions lies inside a contour $C_i^{j,n}$. Therefore, the number of solutions of $p(z, z^n) = 0$ in a region $T$ can be counted with increasing accuracy as $n \to \infty$ by identifying those contours $C_i^{j,n}$ which are contained in $T$ and recalling that the region enclosed by $C_i^{j,n}$ contains exactly $m_i$ solutions. Therefore

\begin{eqnarray*}
\frac{\# \{ z : p(z, z^n) = 0, 1 - \delta < |z| < 1 + \delta, \alpha < \arg z < \beta \}}
{\# \{ z : p(z, z^n) = 0 \}} & = &
\frac{ \sum_{i=1}^k {nm_i(\beta - \alpha)} /{2 \pi} +O(1)} {nm+d}  \\
                & = & \frac{nm(\beta - \alpha)/2\pi + O(1)}{nm+d} \\
                & \to & \frac {\beta - \alpha} {2 \pi} \\
\end{eqnarray*}
as $n \to \infty$, as required.

We now return to the first statement. Let $B = \{ z : |z| < 1 - \epsilon \}$ and $b_n(z) = p(z, z^n) - a_0(z)$ so that $p(z, z^n) = b_n(z) + a_0(z)$.  Let $z_1, \dots, z_k$ be the roots of $a_0(z)$ in $B$. Without loss of generality we may suppose that $\epsilon$ is small enough so that all the roots of $a_0(z)$ with modulus less than $1$ are in $B$.  For $i = 1, \dots, k$ let $D_i$ be a disk of radius $\delta$ and centre $z_i$, and let $S_i$ be its circular boundary. For $\delta$ small enough each $D_i \subseteq B$. There exists $M>0$ such that $ |a_0(z)| > M$ for all $z$ in $B \backslash \cup_i D_i$. Since $b_n(z) \to 0$ uniformly on $B$ as $n \to \infty$, we have for large enough $n$ that
$$|a_0(z)| > M/2 > |b_n(z)|$$ for all $z$ in  $B \backslash \cup_i D_i$. Consequently the only solutions of $p(z,z^n) = 0$ in $B$ are in $\cup_i D_i$. Furthermore, Rouche's theorem applied to the contour $S_i$ implies that each $D_i$ contains a solution of $p(z, z^n) = 0$. Since $\delta$ is arbitrary the result follows. The proof of the second statement is similar.

\end{proof}

\begin{example}\label{tworings} Consider the polynomial $p$ given by
$$ p(z, w) = w^2 + a_1(z)w + a_0(z)$$
where $a_1(z) = -z^2 - z + 9/2$ and $a_0(z) = z^3 - z^2/2 - 4z + 2$. Letting $z = e^{i\theta}$ and solving for $w$ gives two distinct solutions
\begin{eqnarray*}
f_+(e^{i\theta}) &=& e^{i\theta} - 1/2 \\
f_-(e^{i\theta}) &=& e^{2i\theta} - 4
\end{eqnarray*}
for all $\theta \in [0, 2\pi]$. Since $1/2 \leq |f_+(e^{i\theta})| \leq 3/2$ and $3 \leq |f_-(e^{i\theta})| \leq 5$ we   should anticipate that for large $n$ the roots of $p(z,z^n) = 0$ will form two distinct rings of solutions, both close to the unit circle, along with an extra zero close to 1/2. This is illustrated in Figure~\ref{fig:fig3}.

\begin{figure}[h!]
\centering
\scalebox{0.7}{\includegraphics{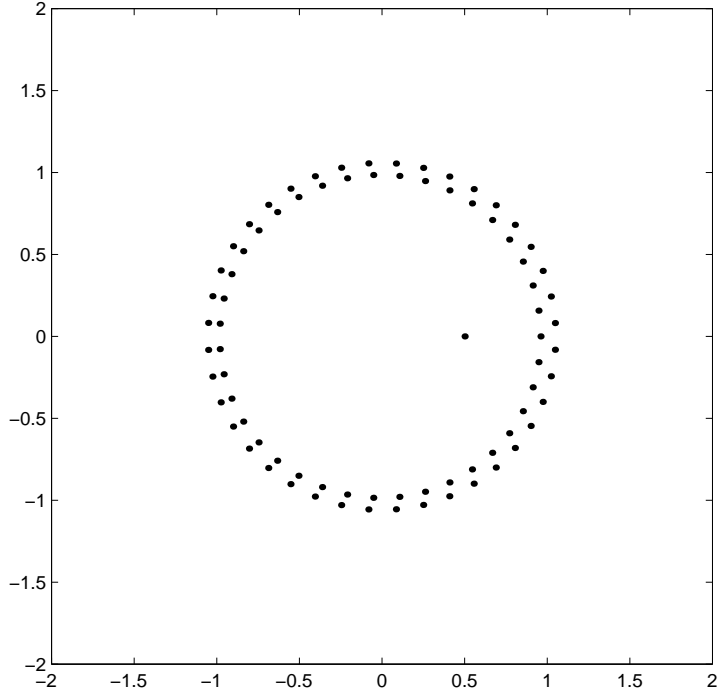}}
\caption{Solutions of $p(z,z^n) = 0$ for $n=40$ in Example~\ref{tworings}.}
\label{fig:fig3}
\end{figure}
\end{example}

\begin{example}\label{interlockrings} Consider the polynomial $p$ given by
$$ p(z,w) = w^2 -4w - 8z+3$$
Letting $z = e^{i\theta}$ and solving for $w$ gives
$$ w = 2 \pm \sqrt{8 e^{i\theta} + 1} $$
For each $\theta$ let $w_1(\theta)$ and $w_2(\theta)$ denote the two roots. The function $f: [0, 2\pi] \to \R$ defined by
$$ f(\theta) = | |w_1(\theta)| - |w_2(\theta)|| $$
measures the difference in absolute value of the two roots when $z = e^{i \theta}$, and is well defined despite their being no canonical choice for $w_1(\theta)$ and $w_2(\theta)$. For each $\theta$ there are two (possibly overlapping) rings of solutions of $p(z,z^n) = 0$ in the vicinity of $e^{i \theta}$ corresponding to those roots close to the solutions of $z^n = w_1(\theta)$ and $z^n = w_2(\theta)$. If $f(\theta_1) > f(\theta_2)$ the two rings of solutions near $e^{i \theta_1}$ are further apart then when near $e^{i \theta_2}$ for fixed $n$. A straightforward calculation shows that
$$f(\theta) = 2(16 \cos \theta + 65)^{1/4}$$
and so $f$ decreases from $0$ to $\pi$ and increases from $\pi$ to $2 \pi$. Therefore one would expect the two rings of solutions to be closest together at $-1$, with the distance between the two rings gradually increasing as one moves from $-1$ to $1$ along the rings. This is illustrated in Figure~\ref{fig:fig4} for $n = 20$.

\begin{figure}[h!]
\centering
\scalebox{0.7}{\includegraphics{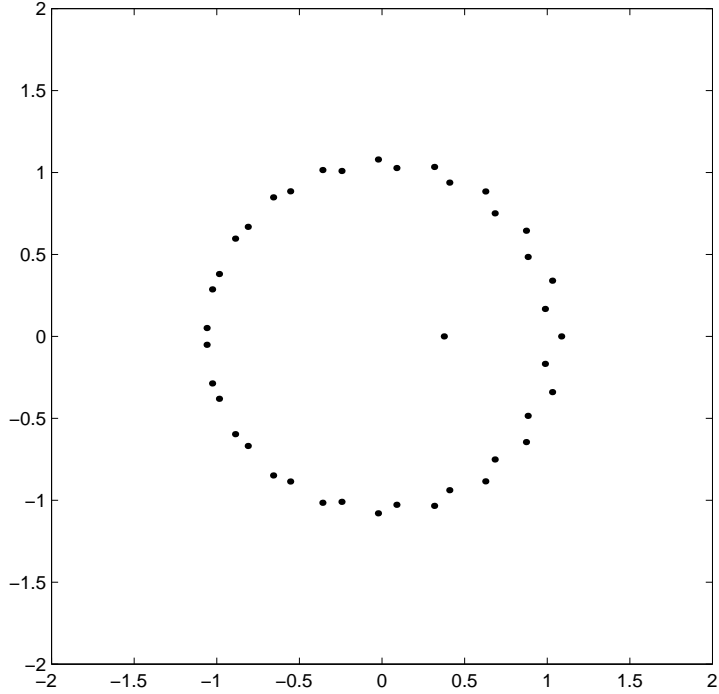}}
\caption{Solutions of $p(z,z^n) = 0$ for $n=20$ in Example~\ref{interlockrings}.}
\label{fig:fig4}
\end{figure}
\end{example}

The correct context for understanding Lemmas~\ref{limset} and \ref{limset2}  involves functions of the type
\begin{equation}\label{generalform} F_n(z) = \sum_{r=1}^m a_r(z) f_r(z)^n
\end{equation}
where $\{f_r \}_{r=1}^m$ and $\{a_r\}_{r=1}^m$ are non zero analytic functions on an open connected set $R$. We impose the following hypotheses on the $f_r$:

\begin{enumerate}
\item There exists a compact set $K \subseteq R$ and constant $c \in (0,1)$ such that $z \in R\backslash K$ implies
    $$
    |f_r(z)/ f_1(z)| < c  \hspace{3mm}\mbox{ if $r\not= 1$.}
    $$
\item For each $r \neq s$ the function $f_r/f_s$ is not constant.
\item For all choices of the three distinct integers $r, s, t$ the set
$$ \{ z \in R : |f_r(z)| = |f_s(z)| = |f_t(z)| \} $$
is finite.
\end{enumerate}

Hypothesis 3 is false for the adjacency matrices of ordered graphs. In this degenerate case the functions $f_r$ are all powers of $z$, and there is only one curve involved, the circle $|z|=1$, as seen in Lemma \ref{asympcircle}.

The set $L$ defined in our next theorem is the union of what might be called the anti-Stokes lines of the function $F_n$. We note that Hypothesis 3 is not required for the proof and so the result holds in a more a general context.
\begin{theorem}\label{mostgeneral} Let $L \subseteq R$ be the compact set consisting of all points at which two of the functions $f_r$ have the same absolute value and no others have a larger absolute value, that is
$$ L = \bigcup_{r=1}^m \{ z \in R: |f_r(z)| = \max\{ |f_s(z)| : s \neq r \} \} .$$
Then given $\epsilon > 0$ every zero of $F_n$ is contained in the $\epsilon$-neighbourhood $L_\epsilon$ of $L$ provided $n$ is large enough, with the exception of certain zeros which converge to certain zeros of one of the functions $a_r$ as $n \to \infty$.
\end{theorem}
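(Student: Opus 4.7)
The plan is to generalize the covering-and-domination argument of Lemma~\ref{limset2} from three summands to $m$. The key observation is that on $R \setminus L_\epsilon$, at every point exactly one $|f_r(z)|$ strictly dominates all the others, and a compactness argument will upgrade this pointwise strict domination to a uniform margin.

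The first step is to cover $R \setminus L_\epsilon$ by finitely many open sets of the form
$$B_{r,\delta} = \{z \in R : (1-\delta)|f_r(z)| > \max_{s \neq r}|f_s(z)|\}$$
sharing a common $\delta > 0$. For the unbounded part I would use Hypothesis~1, which immediately gives $R \setminus K \subseteq B_{1,\delta_1}$ for any $\delta_1 \in (0, 1-c)$. For the compact remainder $K \setminus L_\epsilon$ I would observe that the definition of $L$ means each point $z$ admits an index $r_z$ and $\delta_z > 0$ with $z \in B_{r_z,\delta_z}$; compactness then extracts a finite subcover, and taking the minimum of the $\delta_z$ produces a uniform $\delta$ that works on all of $R \setminus L_\epsilon$.

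The second step analyses $F_n$ on each $B_{r,\delta}$. There the strict domination forces $f_r(z) \neq 0$, so I may factor
$$F_n(z) = f_r(z)^n [a_r(z) + g_n(z)], \qquad g_n(z) = \sum_{s \neq r} a_s(z)\left(\frac{f_s(z)}{f_r(z)}\right)^n.$$
On any compact subset of $B_{r,\delta}$ the bound $|g_n(z)| \leq C(1-\delta)^n$ holds, so $g_n \to 0$ uniformly there. By Hurwitz's theorem, or equivalently by Rouch\'e applied to small circles around each zero of $a_r$ (with Lemma~\ref{conformal} available if a change of variable is convenient), the zeros of $F_n$ in any fixed compact subset of $B_{r,\delta}$ accumulate only at zeros of $a_r$ in $B_{r,\delta}$. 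These are precisely the exceptional zeros appearing in the theorem, and any zero of $F_n$ lying outside $L_\epsilon$ but staying bounded away from the zero sets of all the $a_r$ must eventually leave every such compact subset, so that the finite cover forces it into $L_\epsilon$.

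The main obstacle will be the unbounded component $B_{1,\delta}$, where uniform convergence of $g_n$ is not automatic. I would resolve this by enlarging $K$ (if necessary) so that the zero set of $a_1$ at large $|z|$ is under control, and then applying Rouch\'e on a sufficiently large circle $|z|=R'$ comparing $F_n$ to $a_1 f_1^n$; since $|g_n/a_1|$ tends to zero on such a circle by Hypothesis~1, the two functions have equal zero counts outside this circle, and any such zeros of $F_n$ must lie near zeros of $a_1$. Combined with the compact-set analysis inside, this yields the statement.
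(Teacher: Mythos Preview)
Your approach is essentially the paper's: cover $R\setminus L_\epsilon$ by the sets $B_{r,\delta}$ (Hypothesis~1 handling the unbounded part, compactness the rest), then use the domination of $a_r f_r^n$ on each $B_{r,\delta}$. The one genuine difference is in how the exceptional zeros are isolated. You invoke Hurwitz/Rouch\'e on $a_r(z)+g_n(z)$; the paper instead removes an $\epsilon'$-neighbourhood $Z_{\epsilon'}$ of the union of \emph{all} the zero sets of the $a_r$, sets $M=\max_{i\neq j}\sup_{K\setminus Z_{\epsilon'}}|a_i/a_j|<\infty$, and shows directly that
\[
\left|\frac{F_n(z)}{a_r(z)f_r(z)^n}\right|\ge 1-(m-1)M(1-\delta)^n>0
\]
on $(K\setminus Z_{\epsilon'})\setminus L_\epsilon$ for large $n$, so that any zero outside $L_\epsilon$ must lie in $Z_{\epsilon'}$; letting $\epsilon'\to 0$ and enlarging $K$ finishes. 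This avoids zero-counting entirely. Your concern about the unbounded component $B_{1,\delta}$ is overstated: Hurwitz needs only local uniform convergence of $g_n$, which you already have on compacts, and the paper simply works on an arbitrary compact $K$ and then exhausts $R$ rather than using a large-circle Rouch\'e argument.
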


\begin{proof} The set $L$ is bounded because $|f_1(z)| > |f_r(z)|$ for $r \neq 1$ and $z$ outside a compact set, by hypothesis. It is also closed because each function $|f_r(z)| - \max\{ |f_s(z)| : s \neq r \}$ is continuous in $z$. Hence $L$ is compact. The set $R \backslash L_\epsilon$ is contained in a finite union of sets of the form
$$B_{r,\delta} = \{ z \in R : |f_r(z)(1-\delta)| > \max\{|f_s(z)| : s \neq r \} \} $$
where $1 > \delta > 0$. In fact, by taking $\delta$ small enough we conclude that
$$ R \backslash L_\epsilon \subseteq \bigcup_{r=1}^m B_{r, \delta}.$$
In the set $B_{r, \delta}$ the term $a_r(z) f_r(z)^n$ dominates all the others for large enough $n$ so $F_n(z) \neq 0$, unless $z$ is close to a zero of $a_r$. We make this precise as follows. Let
$$ Z = \bigcup_{r=1}^m \{ z \in R : a_r(z) = 0 \} $$
and let $Z_{\epsilon'}$ be the $\epsilon'$-neighbourhood of $Z$, so that in fact $Z_{\epsilon'}$ is the union of the open disks $B(z_0, \epsilon')$ where $z_0$ is a root of an $a_r(z)$.
Let $K$ be a compact subset of $R$ and let $C$ be the compact set $K \backslash Z_{\epsilon'}$. Let
$$ M = \max \{ \sup_{z \in C} \left | \frac{a_i(z)}{a_j(z)} \right | : i \neq j \} < \infty $$
and choose $N$ large enough so that
$$ (m-1)M(1 - \delta)^N < \frac{1}{2}. $$
If $z \in C \backslash L_\epsilon$ then there exists $r \in \{1,\dots,m\}$ such that $z \in B_{r, \delta}$. This implies

\begin{eqnarray*}
\left |\frac{F_n(z)}{a_r(z)f_r(z)^n} \right | & = & \left | 1 + \sum_{r \neq s} \frac{a_s(z)}{a_r(z)} \left ( \frac{f_s(z)}{f_r(z)} \right )^n \right | \\
    & \geq & 1 -  (m-1)M(1-\delta)^n \\
    & \geq & \frac{1}{2}
\end{eqnarray*}
for $n>N$.  Therefore for $n$ large enough the only solutions of $F_n(z) = 0$ in $K \backslash L_\epsilon$ are within a distance $\epsilon'$ from a root of one of the $a_r$. Enlarging $K$ and decreasing $\epsilon'$ shows that the zeros of $F_n(z)$ not in $L_\epsilon$ converge to the zeros of one of the functions $a_r$ as $ n \to \infty$.
\end{proof}

We note that the zeros of $a_r$ that do not lie in $B_{r,\delta}$ for some $\delta > 0$ have no spectral significance as $n \to \infty$. If however $a_r(z_0) = 0$ and $z_0 \in B_{r, \delta}$ for some $\delta>0$ then there is a seqeunce $\{z_n\}$ such that $F_n(z_n)=0$ and $z_n \to z_0$.

A function $f : (u, v) \to \C$ is said to be real analytic if it is $C^\infty$ and if for all $a$ in $(u, v)$ there is a corresponding $\delta_a > 0$ such that
$$ f(x + a) = \sum_{n=0}^\infty \frac{f^{(n)}(a)}{n!} x^n$$
is an absolutely convergent series for $|x| < \delta_a$. Note that $f$ is real analytic if and only if $\Re f$ and $\Im f$ are real analytic, and that $f$ is real analytic if and only if $f$ is the restriction of a complex analytic function $\tilde{f} : U \to \C$ to $(u,v)$, where $U$ is some complex neighbourhood of $(u,v)$. Sums, products, quotients and compositions of real analytic functions are real analytic, and the square root of a strictly positive real analytic function is real analytic because $\log$ has an analytic branch defined on $\C \backslash [- \infty, 0]$.

The next theorem illuminates the structure of $L$. The following subsets of $R$ will play an important role. We put $A = A_1 \cup A_2 \cup A_3$ where
\begin{eqnarray*}
A_1 &=& \{ z \in R : (f_r/f_s)'(z) = 0 \ {\rm for \ some \ distinct} \ r, \ s \} \\
A_2 &=& \{ z \in R : a_r(z) = 0 \ {\rm for \ some \ integer} \ r \} \\
A_3 &=& \{ z \in R : |f_r(z)| = |f_s(z)| = |f_t(z)| \ {\rm for \ some \ distinct} \ r, \ s, \ t \} \\
\end{eqnarray*}

\begin{theorem}\label{structure} The set
\[
 L = \bigcup_{r=1}^m \{ z \in R : |f_r(z)| = \max\{ |f_s(z)| : s \neq r \} \}
\]
is the disjoint union of sets $D$ and $C_j$, $j=1, \dots , N$, with the following properties. $D$ is a finite subset of $\C$. Each $C_j$ is a real analytic curve, i.e. the range of a real-analytic one-one function $\gamma_j : (u_j, v_j) \to \C$ whose derivative does not vanish anywhere. These curves satisfy the following properties:
\begin{enumerate}
\item $\gamma_j$ is contained in  $$ \{ z \in R : |f_r(z)| = |f_s(z)| > |f_t(z)| \}$$ for distinct integers $r$, $s$ (dependent on $j$) and all $t \neq r, \ s$. If $z \in \gamma_j$ then $(f_r/f_s)'(z) \neq 0$.
\item $\gamma_j$ may be completed to a closed curve, or its ends lie in $D \cap A$.
\item $\gamma_j \cap (A_2 \cup A_3) = \emptyset$ for $j=1,\dots,N$.
\end{enumerate}

\end{theorem}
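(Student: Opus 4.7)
The plan is to set $D := L \cap A$ and identify the curves $C_j$ with the connected components of $L \setminus D$, parameterized via real-analytic maps. I must verify (a) $D$ is finite, (b) $L \setminus D$ is a real-analytic 1-submanifold of $\C$ with only finitely many connected components, and (c) each component admits a real-analytic parameterization satisfying the three listed properties.

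For (a), the set $A_3$ is finite by Hypothesis 3. By Hypothesis 2 each quotient $f_r/f_s$ is non-constant analytic, so $(f_r/f_s)'$ has isolated zeros in $R$; intersecting with the compact set $L$ yields a finite $A_1 \cap L$. Likewise each $a_r$ is non-zero analytic, so its zero set is discrete and $A_2 \cap L$ is finite. Hence $D$ is finite.

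For (b), fix $z_0 \in L \setminus A$. Since $z_0 \notin A_3$, exactly two indices $r,s$ attain the common maximum value among $|f_1(z_0)|,\dots,|f_m(z_0)|$, and by continuity this persists on a neighborhood $U$ of $z_0$; thus $L \cap U = \{z \in U : |g(z)| = 1\}$ where $g := f_r/f_s$. Since $z_0 \notin A_1$ we have $g'(z_0) \neq 0$, so the inverse function theorem together with Lemma~\ref{conformal} identifies $L \cap U$ with the preimage of an arc of the unit circle under a biholomorphism, giving a real-analytic arc through $z_0$ with non-vanishing derivative. This shows $L \setminus D$ is a real-analytic $1$-manifold. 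Finiteness of its connected components follows from the observation that, for each ordered pair $(r,s)$, the real-analytic subset $\{z : |f_r(z)|^2 = |f_s(z)|^2\}$ of a relatively compact open set containing $L$ has only finitely many connected components (the Łojasiewicz structure theorem for real-analytic sets), and there are only finitely many pairs to consider.

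For (c), each component of $L \setminus D$ is a connected real-analytic $1$-manifold, hence analytically diffeomorphic to $S^1$ (a closed loop) or to an open interval, giving an injective real-analytic parameterization $\gamma_j : (u_j,v_j) \to \C$ with non-vanishing derivative. In the non-loop case the one-sided limits as $t \to u_j^+$ and $t \to v_j^-$ exist in the compact set $L$ and cannot lie in $L \setminus D$ (else the local analysis of step (b) would extend $\gamma_j$ past them), so they lie in $D \cap A$, which gives property 2. The pair $(r,s)$ is locally constant on $L \setminus D$ (any transition would force a third function to match in absolute value, i.e.\ a point of $A_3 \subseteq D$), so by connectedness it is globally constant on each $C_j$; combined with $C_j \cap A_1 = \emptyset$ this gives property 1. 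Property 3 is immediate from $C_j \subseteq L \setminus A \subseteq L \setminus (A_2 \cup A_3)$.

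The main obstacle is the global finiteness in step (b): the local picture is clean, but ruling out an infinite family of disjoint arc- or loop-components requires either the Łojasiewicz structure theorem or a Weierstrass-preparation argument at each point of $D$ to bound the number of local branches there, combined with the topological fact that every non-loop arc terminates in the finite set $D$.
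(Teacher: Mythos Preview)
Your proposal is essentially correct but takes a genuinely different route from the paper. You work top-down: set $D = L \cap A$, verify it is finite, show $L \setminus D$ is a real-analytic $1$-manifold via the inverse function theorem, and then invoke the classification of $1$-manifolds together with the \L{}ojasiewicz structure theorem to get finitely many components. The paper instead works bottom-up and entirely elementarily: for \emph{every} point $z_0 \in L$, including the singular points in $A$, it constructs by hand a neighbourhood in which $L \setminus \{z_0\}$ is covered by finitely many explicit analytic arcs. The key case you bypass is $z_0 \notin A_3$ with $(f_r/f_s)'(z_0)=0$: the paper writes $f_r/f_s = a_0 + h(z)^m$ with $h$ conformal and reads off the $2m$ local half-branches of $\{|f_r/f_s|=1\}$ as $h^{-1}$ applied to $m$-th roots of arcs of the unit circle. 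Compactness of $L$ then gives a finite global family of arcs, after which the paper merges overlapping arcs, checks injectivity via periodicity, and finally excises the finitely many points of $A_2 \cup A_3$. So the paper's finiteness of components is a by-product of the explicit local branch count, avoiding any appeal to \L{}ojasiewicz or Weierstrass preparation; your approach is cleaner conceptually but imports heavier machinery precisely at the point you flag as the main obstacle.

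One small gap in your write-up: you assert that the one-sided limits of $\gamma_j$ exist at the endpoints, but an injective analytic arc in a compact set need not a priori have endpoint limits. You need the extra observation that the limit set as $t \to u_j^+$ is connected (nested intersection of connected closures) and contained in the finite set $\overline{C_j} \setminus C_j \subseteq D$, hence a single point; alternatively, the paper later shows the arcs have finite arc length, which also forces the limits to exist.
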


\begin{proof} We begin by showing that the result is true for each set $$ L_r = \{ z \in R :  |f_r(z)| = \max \{ |f_s(z)| : s \neq r \}\}. $$ The first step is to show that for each $z_0$ in $L_r$ there is a corresponding open neighbourhood $U_{z_0}$ such that $L_r \cap U_{z_0} \backslash \{z_0\}$  is contained in the range of finitely many analytic arcs each of which has range contained in $L_r$. There are a number of cases to consider.

If $z_0 \notin A_3$ then there exists a unique $s$ not equal to $r$ such that
$$ |f_r(z_0)| = |f_s(z_0)| > |f_t(z_0)| $$
for all $t \neq r, s$. By continuity there exists an open disk centered at $z_0$, say $B_{z_0}$, and constant $c>0$, such that $z \in B_{z_0}$ implies
$$ |f_r(z)| > c > |f_t(z)| $$
for all $t \neq r, s$. Consequently the set $L_r \cap B_{z_0}$ consists of precisely those points $z$ in $B_{z_0}$ such that $|f_r(z)| = |f_s(z)|$, or equivalently $|f_r(z)/f_s(z)| =1$. Define $f : B_{z_0} \to \C$ by $f(z) = f_r(z)/f_s(z)$. Since $|f(z_0)| = 1$ there exists $\theta_0 \in \R$ such that $f(z_0) = e^{i \theta_0}$. There are now two cases to consider depending on whether $f'(z_0)$ is zero or non-zero.

If $f'(z_0) \neq 0$ then by the inverse mapping theorem there exist open neighbourhoods $U$ of $z_0$ and $V$ of $f(z_0)$ so that $f: U \to V$ has an analytic inverse $g: V \to U$. We may assume that $V$ is a small disk of radius $ r < 1/2$ centered at $f(z_0)$ and that $U \subseteq B_{z_0}$. There exist $u, v \in \R$ satisfying $u < v < u+2\pi$ such that
$$ V \cap S^1 = \{ e^{it} : t \in (u,v) \} $$
where $S^1$ denotes the unit circle centered at $0$. Define $\gamma : (u,v) \to \C$ by $\gamma(t) = g(e^{it})$. We note that $\gamma$ is a real analytic arc because it is the restriction of $g(e^{iz})$ to $(u,v)$. Since
$$ L_r \cap U = \{ \gamma(t) : t \in (u,v) \} $$
it is sufficient to define $U_{z_0}$ to be $U$.

If $f'(z_0) = 0$ things are a bit trickier. Suppose
$$ f(z) = a_0 + a_m (z - z_0)^m + a_{m+1} ( z - z_0)^{m+1}+ \dots $$
where $m \geq 2$ and $a_m \neq 0$. Obviously $a_0 = f(z_0) = e^{i \theta_0}$. We show that $f(z)$ essentially behaves as $z \mapsto z^m$ for $z$ close to $z_0$. We now make this precise. Let
$$ k(z) = a_m + a_{m+1}(z - z_0) + a_{m+2}(z - z_0)^2 + \dots $$
Since $k(z_0) \neq 0$ we see that in a small enough open neighbourhood of $z_0$, say $U_{z_0}$, we can take an analytic branch of $k(z)^{1/m}$. Let $r(z)$ be such a branch. Now
\begin{eqnarray*}
f(z) & = & a_0 + (z - z_0)^m (a_m + a_{m+1} ( z - z_0) + a_{m+2} (z - z_0)^2 +\dots )\\
        & = & a_0 + (z - z_0)^m k(z)    \\
        & = & a_0 + [ (z - z_0) r(z) ]^m \\
        & = & a_0 + h(z)^m
\end{eqnarray*}
where $h(z) = (z - z_0)  r(z)$ satisfies $h(z_0) = 0$ and $h'(z_0) = r(z_0) \neq 0$. Since $h'(z_0) \neq 0$ we may shrink $U_{z_0}$ so that $U_{z_0} \subseteq B_{z_0}$ and $h : U_{z_0} \to U_0$ is conformal, where $0 \in U_0 = h(U_{z_0})$. Choose $ \epsilon < 1/2$ so that $ B(0, \epsilon) \subseteq U_0$, and shrink $U_{z_0}$ again so that $U_{z_0} = h^{-1}(B(0,\epsilon))$. This enables us to factorise $f(z)$ as the composition of the following three surjective maps
$$ U_{z_0} \xrightarrow{z \mapsto h(z)} B(0, \epsilon) \xrightarrow{z \mapsto z^m} B(0, \epsilon^m)
\xrightarrow{z \mapsto z + a_0} B(a_0, \epsilon^m) $$
Let $ s : B(0, \epsilon^m) \backslash [0, -a_0] \to B(0, \epsilon)$ be an analytic branch of $z \mapsto z^{1/m}$ where the branch cut $[0, -a_0]$ is the line connecting $0$ to $-a_0$.  The $m$ analytic branches of $z \mapsto z^{1/m}$ on $B(0, \epsilon^m) \backslash [0, -a_0]$ are then given by
$$ s_k(z) = e^{\frac{2\pi i}{m} k} s(z) \ {\rm for} \ k = 1,\dots, m.$$
Recalling that $a_0 = e^{i \theta_0}$, there exist $\theta_{-}, \theta_{+} \in \R$ such that $ \theta_{-} < \theta_0 < \theta_{+} < \theta_{-} + 2\pi$ and
$$ B(a_0, \epsilon^m) \cap S^1 = \{ e^{it} : t \in (\theta_{-}, \theta_{+}) \} $$
For $k = 1,\dots,m$ define $\alpha_k : (\theta_{-}, \theta_0) \to \C$ by
$$ \alpha_k(t) = h^{-1}(s_k(e^{it} - a_0)) $$
Similarly, for $k = 1,\dots, m$ define $\beta_k : (\theta_0, \theta_{+}) \to \C$ by
$$ \beta_k(t) = h^{-1}(s_k(e^{it} - a_0)) $$
We note that $\alpha_k$ and $\beta_k$ are real analytic curves that satisfy
$$ L_r \cap U_{z_0} \backslash \{z_0\} = \bigcup_{k=1}^m\{ \alpha_k(t) : t \in (\theta_{-}, \theta_0) \} \cup \{\beta_k(t) : t \in (\theta_0, \theta_+) \} $$
and so $L_r \cap U_{z_0}$ minus the point $z_0$ is contained in the range of finitely many analytic arcs, as required.

Now we consider the cases when $z_0 \in A_3$. Since $z_0 \in L_r$, $f_r$ and at least two other functions, say $f_{s_i}$ for $i=1,\dots, k$, attain the maximum absolute value, i.e.
$$ |f_r(z_0)| = |f_{s_1}(z_0)| = \dots = |f_{s_k}(z_0)| = \max \{ |f_s(z_0)| : s \neq r \} $$
There are two cases to consider depending on whether or not $f_r(z_0)$ is $0$.  We first suppose $f_r(z_0) \neq 0$. By continuity there exists $\epsilon, c > 0$ so that if $z \in B(z_0, \epsilon)$ then
$$ |f_r(z)| > c > |f_t(z)| $$
$$ |f_{s_i}(z)| > c > |f_t(z)| \ \ {\rm for} \ i = 1, \dots, k$$
for $t \neq r, s_1, \dots , s_k$. If necessary reduce $\epsilon$ so that $B(z_0, \epsilon) \cap A_3 = \{z_0\}$. For each $i = 1, \dots , k$ there exists an open set $U_i \subseteq B(z_0, \epsilon)$ such that the set $ \{ z \in U_i \backslash \{z_0\} : |f_r(z)| = |f_{s_i}(z)| \}$  can be parameterised by finitely many analytic arcs, say $\gamma^{(i)}_1, \dots , \gamma^{(i)}_{n_i}$. Let $\gamma : (u, v) \to \C$ be one of these curves so that $|f_r(\gamma(x))| = |f_{s_i}(\gamma(x))|$ for all $x \in (u,v)$. We show that either $\gamma \subseteq L_r$ or $\gamma \cap L_r = \emptyset$. Pick $t \neq r, s_i$ and suppose that there exists $x_1, x_2 \in (u,v)$ such that
$$ |f_t( \gamma(x_1))| < |f_r( \gamma(x_1))| $$
and $$ |f_t( \gamma(x_2))| > |f_r( \gamma(x_2))| .$$
By the intermediate value theorem there is a point $x_3$ between $x_1$ and $x_2$ such that
$$ |f_t(\gamma(x_3))| = |f_r(\gamma(x_3))| $$
and so $\gamma(x_3) \in A_3$. But by construction $\gamma \cap A_3 = \emptyset$ and so this is impossible. Hence we either have
$$ |f_t(z)| < |f_r(z)| \ \ {\rm for \ all} \ z \in \gamma$$
or we have
$$ |f_t(z)| > |f_r(z)| \ \ {\rm for \ all} \ z \in \gamma.$$
If there is a $t \neq r, s_i$ such that the latter case holds then $\gamma \cap L_r = \emptyset$, otherwise $\gamma \subseteq L_r$. Let
$$ U_{z_0} = \bigcap_{i = 1}^k U_i$$
and
$$ \cC = \bigcup_{i=1}^k \{ \gamma^{(i)}_j \subseteq L_r : j= 1,\dots, n_i \} .$$
It is clear that
$$ L_r \cap U_{z_0} \backslash \{ z_0 \} \subseteq \bigcup_{\gamma \in \cC} \gamma \subseteq L_r$$
as required.

The remaining case is when $f_r(z_0) = 0$. This implies all of the functions $f_s$ vanish at $z_0$ and so one of these functions has a zero of lowest order, say $k$. For $s = 1, \dots, m$ put $ g_s(z) = f_s(z)/(z-z_0)^k$. Not all the $g_s(z)$ vanish at $z_0$. If $|g_r(z_0)| = \max \{|g_s(z_0)| : s \neq r \}$ then $g_r(z_0) \neq 0$ and so we apply one of the previous cases to the $g_s$ since for all $z$ we have
$$ |f_r(z)| = \max \{ |f_s(z)| : s \neq r \} \ \ {\rm if \ and \ only \ if} \ \ |g_r(z)| = \max \{ |g_s(z)| : s \neq r \}. $$
If equality does not hold, then there is an open neighbourhood $U_{z_0}$ of $z_0$ such that $L_r \cap U_{z_0} = \{z_0\}$.

We have now shown that for each $z$ in $L_r$ there is an open set $U_z$ containing $z$ such that $L_r \cap U_z \backslash \{z\}$ is contained in the union of finitely many analytic arcs. If $z$ is not on one of these analytic arcs then $z \in A$. Since $L_r$ is compact it can be covered by finitely many such open sets, say $U_{z_1}, \dots , U_{z_n}$, and consequently $L_r$ is the union of finitely many analytic arcs, say $\gamma_1, \dots., \gamma_n$ and a finite subset of $A$. We note that each $\gamma_i$ has non-vanishing gradient since each is of the form $(f_r/f_s)^{-1}(e^{it})$ for some analytic branch of $(f_r/f_s)^{-1}$. However, as it stands, the $\gamma_i$ might not be disjoint from themselves and this must now be rectified.

Now, suppose that the $\gamma_i$ are not disjoint. Without loss of generality we may suppose that $\gamma_1 : (u_1, v_1) \to \C$ and $\gamma_2 : (u_2, v_2) \to \C$ are not disjoint, that is, there exists $t_1 \in (u_1, v_1)$ and $t_2 \in (u_2, v_2)$ such that $\gamma_1(t_1) = \gamma_2(t_2) = w_0$  for some $w_0 \in \C$. We know that $\gamma_1(t) = (f_r/f_{s_1})^{-1}(e^{it}) $ for some $s_1$ and that $\gamma_2(t) = (f_r/f_{s_2})^{-1}(e^{it})$ for some $s_2$. If $s_1 \neq s_2$ then $w_0 \in A_3$ which contradicts $\gamma_i \cap A_3 = \emptyset$ for all $i$. Therefore $s_1 = s_2 = s$. Now
$$ e^{i t_1} = (f_r/f_s)(w_0) = e^{i t_2} $$
and so $t_1 = t_2 + 2\pi n$ for some integer $n$. Since $e^{it}$ is $2 \pi$ periodic we can reparameterise the range of $\gamma_2$ by
$$ \tilde{\gamma_2} : (\tilde{u_2}, \tilde{ v_2 }) \to \C \ \ {\rm where} \ \tilde{\gamma_2}(t) = (f_r/f_s)^{-1}(e^{it})$$
where $\tilde{u_2} = u_2 + 2 \pi n$, $\tilde{v_2} = v_2 + 2 \pi n$ and where $(f_r/f_s)^{-1}$ is the same analytic branch used in the definition of $\gamma_2$. This ensures that $\gamma_1(t_1) = \tilde{\gamma_2}(t_1) = w_0$. Let $f = f_r/f_s$ and note that $f(w_0) = e^{it_1}$. Since $f'(w_0) \neq 0$ there exist open neighbourhoods $U$ of $w_0$ and $V$ of $e^{i t_1}$ such that $f : U \to V$ has analytic inverse $ g : V \to U$. If $\delta > 0$ is small enough then for all $t$ in $(t_1 - \delta, t_1 + \delta)$ we have
$$ \gamma_1(t) = g(e^{i t}) = \tilde{\gamma_2}(t). $$
Since $\gamma_1$ and $\tilde{\gamma_2}$ are analytic and agree on a small interval we deduce that  they must also agree on all of $(u_1,v_1) \cap (\tilde{u_2}, \tilde{v_2})$ by the principle of isolated zeros. Hence we can replace $\gamma_1$ and $\tilde{\gamma_2}$ by the single analytic arc
$$\tau : (\min\{u_1, \tilde{u_2}\}, \max\{ v_1, \tilde{v_2}\} ) \to \C \ \ {\rm where} \ \
\tau(t) = \left \{ \begin{array}{ll}
            \gamma_1(t)     & \mbox{ if $t \in (u_1, v_1)$} \\
            \tilde{\gamma_2}(t) &  \mbox{otherwise}
            \end{array}
        \right .
$$

If we continue the above process of replacing two overlapping arcs by a single analytic arc
 then after finitely many repetitions we will arrive at finitely many disjoint analytic arcs, say $\tau_1, \dots , \tau_m$, such that $\cup_i \tau_i = \cup_i \gamma_i$. It is clear that for each $\tau_i$ there is a corresponding $s_i$ such that the range of $\tau_i$ is contained in
$$ \{ z \in R : |f_r(z)| = |f_{s_i}(z)| > |f_t(z)| \}$$
for all $t \neq r, s_i$.

 There is no guarantee that each $\tau_i : (u_i, v_i) \to \C$ is one-one, but this situation can be easily rectified. Suppose $\tau_i : (u_i, v_i) \to \C$ is not one-one. There then exist $a,b \in (u_i, v_i)$ with $ a < b$ such that $\tau_i(a) = \tau_i(b)$ and $\tau_i | (a,b)$ is one-one.  Since
 $$ e^{i a} = (f_r/f_{s_i})(\tau(a)) = (f_r/f_{s_i})(\tau(b)) = e^{i b} $$
 we conclude that $b - a = 2  \pi n $ for some $n$. Let $\tilde{\tau_i} : (u_i, v_i) \to \C$ be the unique $2 \pi n$ periodic function such that $ \tilde{\tau_i} | [a,b) = \tau_i | [a, b)$. If $\delta > 0$ is small enough then for $|t| < \delta$ we have
 $$ \tau(a + t) = \tau(b + t).$$
 This implies $\tilde{\tau_i}$ is analytic and so $\tau_i(t) = \tilde{\tau_i}(t)$ for all $t \in (u_i, v_i)$. Therefore we may replace $\tau_i$ by the one-one function $\tau_i | (a, b)$ and its endpoint $\tau_i(a)$. So we can assume that all the $\tau_i$ are one-one.

We conclude that for each $r=1, \dots , m$ the set $L_r$ is the disjoint union of a finite subset $D_r$ of $\C$ and one-one analytic curves $\gamma^{(r)}_1, \dots , \gamma^{(r)}_{n_r}$ that satisfy Properties $1$ and $2$ in the statement of the theorem. It is possible that for $r \neq s$ that $\gamma^{(r)}_i$ is not disjoint from $\gamma^{(s)}_j$ for some $i$ and $j$. It follows by Property 1 that $\gamma^{(s)}_j \subseteq L_r$. Hence $\gamma^{(s)}_j$ is redundant and can be deleted. We continue this process until the remaining curves are disjoint. We conclude that $L = \cup_r L_r$ is the disjoint union of a finite subset $D$ of $\C$ and one-one analytic curves, say $\gamma_1, \dots, \gamma_n$, that satisfy Properties 1 and 2.

Finally, we want each $\gamma_i$ to also satisfy Property 3, that is $\gamma_i \cap (A_2 \cup A_3) = 0$. We first note $L \cap (A_2 \cup A_3)$ is finite because $L$ is compact and $A_2 \cup A_3$ discrete. If $\gamma_i : (u_i, v_i) \to \C$ meets $A_2\cup A_3$, that is $\gamma_i \cap (A_2\cup A_3) \neq \emptyset$, then there exist finitely many points, say $a_1, \dots, a_m$, such that
$$ u_i < a_1 < a_2 < \dots < a_m < v_i$$
and $\gamma_i(t) \in (A_2 \cup A_3)$ if and only if $t \in \{ a_1, \dots , a_m \}$.  We now just replace $\gamma_i$ by $$ \gamma_i | (u_i, a_1), \ \gamma_i| (a_1, a_2), \dots , \ \gamma_i | (a_m, v_i) $$
and enlarge $D$ with the points $\gamma_i(a_k)$ for all $k$. After doing this for each $\gamma_i$, we obtain a finite subset $D$ of $\C$ and a finite collection of analytic curves that satisfy all the required properties.
 \end{proof}

We now show that each arc $\gamma: (u,v) \to \C$ in Lemma~\ref{structure} can be parameterised by arc length with the arc length parameterization also being real analytic. We begin by showing that the length of $\gamma$ is finite, that is
\[ l = \int_u^v |\gamma'(t)| \ dt < \infty.\]
This is achieved by showing that $|\gamma'(t)| = O((t-u)^a)$ as $t \to u$ for some $a > -1$, and likewise at the opposite endpoint $v$. The problematic case is when for $t$ near $u$ we have
$$\gamma(t) = h^{-1}(s(e^{it} - e^{iu})) $$
where $h$ is conformal and $s$ is a branch of $z \mapsto z^{1/m}$. Differentiating it suffices to show that
$$ |s'(e^{it} - e^{iu})| = O((t-u)^a) $$
for some $a > -1$. Now
\begin{eqnarray*}
|s'(e^{it} - e^{iu})| & = & \frac{1}{m} |e^{i(t-u)} -1|^{-(m-1)/m} \\
               & \leq & c_0 (t - u)^{-(m-1)/m} \\
\end{eqnarray*}
for some constant $c_0 > 0$ as $t \to u$, as required. Hence $l < \infty$, as required. This implies the length function $ s : (u,v) \to (0, l)$ where
$$ s(t) = \int_u^t |\gamma'(x)| \ dx $$
is well defined. Since
$$ | \gamma'(x)| = (\gamma'(x) \overline{\gamma'(x)})^{1/2} $$
and $\gamma'(x)$ does not vanish we conclude that $|\gamma'(x)|$ is real analytic. Therefore $s$ is real analytic. Since $s'(x) > 0$ for all $x \in (u,v)$ we conclude that $s^{-1}$ is real analytic by the inverse mapping theorem. Therefore the arc length parameterization  $\tilde{\gamma} : (0, l) \to \C$ of $\gamma$ defined by $\tilde{\gamma}(t) = \gamma(s^{-1}(t))$ is real analytic.

Our next theorem analyses the distribution of the zeros of $F_n$ along $L$. The proof identifies the exceptional points and the densities in the statement.


\begin{theorem}\label{asympdensity} There exists a finite subset $D$ of $L$, real-analytic curves $\gamma_r : (0, l_r) \to \C$ for $r =1,\dots, N$ parameterized by arc length with non-vanishing gradients, and bounded real-analytic densities $ \rho_r : (0, l_r) \to (0,\infty)$ with the following properties:
\begin{enumerate}
\item $L$ is the disjoint union of $D$ and the ranges of all the $\gamma_r$.
\item For every choice of $r$ and every closed subinterval $[\alpha, \beta]$ of $(0, l_r)$ we have
$$ \lim_{\epsilon \to 0} \ N_\epsilon= \int_\alpha^\beta \rho_r(s) \ ds,$$
where
$$ N_\epsilon = \lim_{n \to \infty} \ \frac{1}{n} | \{ z : F_n(z) = 0 \} \cap S_\epsilon |$$
and
$$S_\epsilon = \{ z \in \C : {\rm dist}(z, \gamma_r([\alpha, \beta])) < \epsilon \}.$$
\end{enumerate}
\end{theorem}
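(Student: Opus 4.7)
The decomposition of $L$ into a finite set $D$ and real-analytic arcs $\gamma_r$ parameterized by arc length comes directly from Theorem~\ref{structure} and the arc-length reparameterization argument following it, so only the density statement requires genuine work. I would fix $r$ and a closed subinterval $[\alpha,\beta]\subset(0,l_r)$. By Property~1 of Theorem~\ref{structure} there exist indices $i,j$ (depending only on $r$) with $|f_i(z)|=|f_j(z)|>|f_t(z)|$ on $\gamma_r$ for all $t\neq i,j$, and the quotient $h:=f_j/f_i$ has non-vanishing derivative there; by Property~3, no $a_s$ vanishes on $\gamma_r$. I would choose a tubular neighborhood $U$ of $\gamma_r([\alpha,\beta])$ small enough that on $\overline U$ we have $|f_t/f_i|<c<1$ for $t\neq i,j$, both $a_i$ and $a_j$ bounded away from zero, and $h:U\to V:=h(U)$ conformal. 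Then
\[
F_n(z)=f_i(z)^n\bigl[\,a_i(z)+a_j(z)h(z)^n+R_n(z)\,\bigr],\qquad R_n(z):=\sum_{t\neq i,j}a_t(z)\bigl(f_t(z)/f_i(z)\bigr)^n,
\]
with $\|R_n\|_{\overline U}$ decaying exponentially in $n$, so the zeros of $F_n$ in $U$ coincide with the zeros of $G_n:=a_i+a_jh^n+R_n$.

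Pushing forward by $g:=h^{-1}:V\to U$ converts $G_n=0$ into $w^n=\Psi(w)+E_n(w)$ where $\Psi:=-(a_i\circ g)/(a_j\circ g)$ is analytic and non-vanishing on $V$ and $E_n:=-(R_n\circ g)/(a_j\circ g)\to 0$ uniformly and exponentially on $V$; the image $h(\gamma_r([\alpha,\beta]))$ is a compact arc on the unit circle $S^1\subset V$. The leading equation $w^n=\Psi(w)$ has solutions $w_k(n)$ near $S^1$ with $|w_k(n)|=|\Psi(w_k(n))|^{1/n}\to 1$ and arguments $\phi_k(n)$ satisfying $n\phi_k(n)\equiv\arg\Psi(e^{i\phi_k(n)})\pmod{2\pi}$; I would enclose each such $w_k(n)$ in an annular sector $C_{r_n,R_n,\theta_{1,n},\theta_{2,n}}$ with inner radius $(\tfrac12|\Psi(w_k(n))|)^{1/n}$, outer radius $(\tfrac32|\Psi(w_k(n))|)^{1/n}$, and angular width $\pi/n$ centred at $\phi_k(n)$. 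Arguments very much like those in the proofs of Lemma~\ref{limset2} and Lemma~\ref{asympcircle} show that $|w^n-\Psi(w)|$ is bounded below by a positive constant on the sector boundaries uniformly in $n$, while $|E_n|$ is eventually much smaller; Rouche's theorem then places exactly one zero of $w^n-\Psi(w)-E_n(w)$ in each sector, and these sectors are disjoint, lie in $V$, and exhaust the zeros in a thin $n$-dependent annulus about $S^1\cap V$.

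Counting the sectors whose $\phi_k(n)$ land in $\bigl(\arg h(\gamma_r(\alpha')),\arg h(\gamma_r(\beta'))\bigr)$ yields
\[
\#\bigl(\{F_n=0\}\cap\text{tube about }\gamma_r([\alpha',\beta'])\bigr)=\frac{n}{2\pi}\bigl|\theta(\beta')-\theta(\alpha')\bigr|+O(1),
\]
where $\theta(s):=\arg h(\gamma_r(s))$. Differentiating $h(\gamma_r(s))=e^{i\theta(s)}$ and using $|\gamma_r'(s)|=1$ gives $|\theta'(s)|=|h'(\gamma_r(s))|$, so setting
\[
\rho_r(s):=\frac{|h'(\gamma_r(s))|}{2\pi}
\]
produces a bounded, strictly positive, real-analytic density (positive because $h'\neq 0$ on $\gamma_r$ by Property~1), and the count becomes $n\int_{\alpha'}^{\beta'}\rho_r(s)\,ds+O(1)$. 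The tube $S_\epsilon$ is a neighborhood of $\gamma_r([\alpha,\beta])$ whose two end-caps project onto arc-length intervals of length at most $O(\epsilon)$ and therefore contribute at most $O(n\epsilon)$ extra zeros; dividing by $n$, letting $n\to\infty$ first and then $\epsilon\to 0$, yields $\lim_{\epsilon\to 0}N_\epsilon=\int_\alpha^\beta\rho_r(s)\,ds$.

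The main obstacle is the uniform Rouche bookkeeping in the second paragraph: unlike in Lemma~\ref{asympcircle}, where the analogue of $\Psi$ is constant, here $\Psi(w)$ genuinely depends on $w$, so the sector centres $w_k(n)$ move with both $n$ and $\Psi$, and the required lower bound for $|w^n-\Psi(w)|$ on each sector boundary must be extracted from a local Lipschitz approximation $\Psi(w)\approx\Psi(w_k(n))$ valid on sectors of diameter $O(1/n)$. Once this uniformity is in hand, the remaining ingredients---the exponential smallness of $E_n$, the argument-principle count, and the passage from angular spacing to arc-length density---are routine.
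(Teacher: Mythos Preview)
Your proposal is correct and follows essentially the same route as the paper: reduce $F_n=0$ on a neighbourhood of the arc to $f(z)^n+a(z)+(\text{exponentially small})=0$ with $f=f_j/f_i$ and $a=a_i/a_j$, pass conformally to the $w$-plane to get $w^n+\tilde a(w)+\tilde g_n(w)=0$, and count zeros by Rouch\'e in annular sectors to obtain the density $\rho_r=\theta'/(2\pi)=|h'(\gamma_r)|/(2\pi)$.

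The only organisational difference is that the paper does \emph{not} assume a single global conformal chart $h:U\to V$ on a tube about $\gamma_r([\alpha,\beta])$. Instead it covers the arc by finitely many local charts $N_{z_0}=g(A_{w_0})$ with $\theta$-width less than $\pi/2$, and inside each chart it freezes $\tilde a$ at the chart centre $w_0$ (so the comparison equation is the explicit $w^n+\tilde a(w_0)=0$) rather than at the implicitly defined roots $w_k(n)$ of $w^n=\Psi(w)$. This avoids two small issues in your write-up: $h$ need not be injective on a full tubular neighbourhood if $\theta(\beta)-\theta(\alpha)\geq 2\pi$, and locating your $w_k(n)$ already requires solving a non-constant equation before Rouch\'e can be applied. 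Both points are easily repaired by exactly this finite localisation, and once that is done your Lipschitz-freezing argument and the paper's are the same; the resulting count $n(\theta(\beta)-\theta(\alpha))/(2\pi)+O(1)$ and the end-cap estimate are identical.
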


\begin{proof}
Let $\gamma : (0,l) \to \C$ be one of the analytic curves in $L$. It has range contained in
$$ \{ z \in R : |f_r(z)| = |f_s(z)| > |f_t(z)| \} $$
for some integers $r \neq s$ and all $t \neq r, s$. By taking a sufficiently small open neighbourhood $U$ of $\gamma([\alpha, \beta])$ we can ensure that $z \in U$ implies $a_r(z) \neq 0$, $f_s(z) \neq 0$ and
$$ \left | \frac{f_k(z)}{f_s(z)} \right | < 1 - \delta $$
for some  $\delta \in (0,1)$ and all $k \neq r, s$. Since $a_r(z)f_s(z)^n \neq 0$ for all $z \in U$ the solutions (counted with multiplicity) of $F_n(z) = 0$ in $U$ are precisely the same as those of $F_n(z)/(a_r(z)f_s(z)^n) = 0$. Now
$$ \frac{F_n(z)}{a_r(z) f_s(z)^n} = \left ( \frac{f_r(z)}{f_s(z)} \right )^n + \frac{a_s(z)}{a_r(z)} +
\sum_{k \neq r,s} \frac{a_k(z)}{a_r(z)} \left ( \frac{f_k(z)}{f_s(z)} \right )^n $$
and so with $f(z) = f_r(z)/f_s(z)$, $a(z) = a_s(z)/a_r(z)$ and $g_n(z) = \sum_{k \neq r,s} ( a_k(z)/a_r(z) )(f_k(z)/f_s(z))^n$
we are interested in solving the equation
$$  f(z)^n + a(z) + g_n(z) = 0 $$
inside $U$. We note that $g_n(z)$ is uniformly exponentially small in $U$ as $n \to \infty$.

Define the argument function $\theta : (0, l) \to \R$ by $\theta(t) = - i \log(f(\gamma(t))$  using an analytic branch of $\log f \circ \gamma$. We note that $\theta$ is real analytic and can be extended continuously to the closed interval $[0,l]$. Also, define the density function $\rho : (0,l) \to \R$ by $\rho(t) = \theta'(t)/2\pi$. Suppose $\gamma$ is the arc length parameterization of $\tau: (u,v) \to \C$ where $\tau$ is of the form $f^{-1}(e^{it})$ as in Theorem \ref{structure}. Letting $s: (u,v) \to (0,l)$ be the length function
$$s(t) = \int_u^t |\tau'(x)| \  dx$$
we recall that $\gamma(t) = \tau(s^{-1}(t))$. Consequenty $\theta(t) = s^{-1}(t) + c$ for some constant $c$ and so $\rho(t)$ is bounded if $(s^{-1})'(t)$ is bounded. It suffices to check that there exists a constant $C > 0$ such that $|s'(t)| \geq C$ for all $t \in (u,v)$. Since
$$ |s'(t)| = |\tau'(t)| = |(f^{-1})'(e^{it})| $$
where $f^{-1}$ is an analytic branch of a local inverse of $f$, it suffices to check that $f'(z)$ is bounded for $z \in \tau$ (or equivalently $\gamma$.) But if one observes the construction of $\tau$ this is clearly the case. Hence $\rho$ is bounded. Also $\rho$ is positive because $(s^{-1})'(t)$ is non zero and $s$ is increasing.

The idea of the proof is to now cover $\gamma( [\alpha, \beta] )$ with finitely many sets  with disjoint interiors such that in each set we can count the number of solutions of $F_n(z) = 0$. The boundaries of these sets contribute $O(1)$ zeros as $n \to \infty$ and so their contribution can be ignored. This will be achieved over a number of steps.

Let $z_0 \in \gamma( [\alpha, \beta] )$ and let $w_0 = f(z_0)$. Since $f'(z_0) \neq 0$ there are open neighbourhoods $V$ of $z_0$ and $W$ of $w_0$ such that $f : V \to W$ is conformal with analytic inverse $g : W \to V$. We may assume that $V$ is sufficiently small so that $V \subseteq U$ and $z \in V$ implies
$$ | a(z) - a(z_0) | < \frac{ |a(z_0)|}{8}. $$
Since $|w_0| =1$ there exists $\phi_0 \in \R$ such that $w_0 = e^{i \phi_0}$. Let $C_{w_0}$ be the contour $C_{r, R, \theta_1, \theta_2}$ where $r = 1-\delta$, $R = 1+ \delta$, $\theta_1 = \phi_0 - \delta$, $\theta_2 = \phi_0 + \delta$ where $\delta > 0$ is small enough so that $C_{w_0}$ and its interior is contained in $W$. Let $A_{w_0}$ be the union of $C_{w_0}$ and its interior, so that $A_{w_0}$ is a sector of an annulus with angular sweep $\theta_2 - \theta_1$. Finally let $N_{z_0} = g(A_{w_0})$. We show that
$$ \#\{ z \in N_{z_0}^\circ : F_n(z) = 0 \} =  \frac{ n (\theta_2 - \theta_1)}{2 \pi} + O(1) $$
as $n \to \infty$. Let $\tilde{F_n}(w) = F_n(g(w))$ for $w \in W$. Since $g: W \to V$ is conformal Lemma~\ref{conformal} implies
$$  \#\{ z \in N_{z_0}^\circ : F_n(z) = 0 \} =  \#\{ w \in A_{w_0}^\circ : \tilde{F_n}(w) = 0 \}.$$
The solutions of $\tilde{F_n}(w) = 0$ counted with multiplicity are precisely the same as those of
$$ w^n + \tilde{a}(w) + \tilde{g_n}(w) = 0$$
where $\tilde{a}(w) = a(g(w))$ and $\tilde{g_n}(w) = g_n(g(w))$. We use Rouche's theorem to show that each such solution in $A_{w_0}^\circ$ is close to a root of
$$ w^n + \tilde{a}(w_0) = 0$$
which has roots $$ w_j = |\tilde{a}(w_0)|^{1/n} e^{ i s_j} \ \ {\rm for} \ j = 1, \dots, n$$
where $s_{j+1} - s_j = 2 \pi / n$. For $j = 1, \dots, n$ define $C^{j,n}$ to be the contour $C_{r_n, R_n, \theta_{1, j}, \theta_{2, j}}$ where $r_n = (\frac{1}{2}|\tilde{a}(w_0)|)^{1/n}$, $R_n = (\frac{3}{2}|\tilde{a}(w_0)|)^{1/n}$, $\theta_{1,j} = s_j - \pi/{2n}$ and $\theta_{2,j} = s_j + \pi/{2n}$. We note that the $C^{j, n}$ approach the unit circle uniformly as $n \to \infty$ and are evenly distributed around the unit circle by an angle of $2 \pi/n$.

We show that if $C^{j, n} \subseteq A_{w_0}$ then $\tilde{F_n}(w) = 0$ has precisely one solution inside $C^{j,n}$.  Routine estimates show that if $w \in C^{j, n}$ then $|w^n + \tilde{a}(w_0)| > |\tilde{a}(w_0)|/2$, where we note that the lower bound is independent of $n$. Also, since $\tilde{g_n}(w) \to 0$ uniformly as $n \to \infty$ there exists $N>0$ such that for $n > N$ we have $|\tilde{g_n}(w)| < |\tilde{a}(w_0)| / 8$ for all $w \in W$. Therefore for $w \in C^{j,n}$ and $n > N$ we have
\begin{eqnarray*}
| w^n + \tilde{a}(w) + \tilde{g_n}(w) | & \geq & |w^n + \tilde{a}(w_0)| - |\tilde{a}(w_0) - \tilde{a}(w)| - |\tilde{g_n}(w)| \\
                    & \geq &  \frac{|\tilde{a}(w_0)|}{2} - \frac{|\tilde{a}(w_0)|}{8} - \frac{|\tilde{a}(w_0)|}{8} \\
                    & = & \frac{|\tilde{a}(w_0)|}{4} \\
\end{eqnarray*}
and
\begin{eqnarray*}
| (w^n + \tilde{a}(w) + \tilde{g_n}(w)) - (w^n + \tilde{a}(w_0))| & \leq & |\tilde{a}(w) - \tilde{a}(w_0)| + |\tilde{g_n}(w)| \\
                                & < & \frac{|\tilde{a}(w_0)|}{8} + \frac{|\tilde{a}(w_0)|}{8} \\
                                & = & \frac{|\tilde{a}(w_0)|}{4}
\end{eqnarray*}
Therefore by Rouche's theorem $w^n + \tilde{a}(w) + \tilde{g_n}(w) = 0$ and $w^n + \tilde{a}(w_0)=0$ have the same number of solutions inside $C^{j,n}$, and so $\tilde{F_n}(w) = 0$ has precisely one solution counted with multiplicity inside $C^{j,n}$.

Let $D^{j, n}$ be the open region enclosed by $C^{j,n}$. Routine estimates show that if $w \in A_{w_0} \backslash \cup_{j} D^{j,n}$ then $| w^n + \tilde{a}(w_0) | \geq |\tilde{a}(w_0)|/2$. Consequently for $w \in A_{w_0} \backslash \cup_{j} D^{j,n}$ and $n > N$ we have
$$ | w^n + \tilde{a}(w) + \tilde{g_n}(w) | > \frac{|\tilde{a}(w_0)|}{4} $$
and so $\tilde{F_n}(w) = 0$ has no solutions outside the $C^{j,n}$. Therefore
\begin{eqnarray*}
\# \{ z \in N_{z_0}^\circ : F_n(z) = 0 \} &=&   \#\{ w \in A_{w_0}^\circ : \tilde{F_n}(w) = 0 \} \\
                            & = &\# \{ j : C^{j, n} \subseteq A_{z_0} \} + O(1)  \\
                             & = & \frac { n(\theta_2 - \theta_1)}{2 \pi} + O(1)
\end{eqnarray*}
as $n \to \infty$. We also note that the boundary of $N_{z_0}$ has $O(1)$ (in fact, at most 2) solutions as $n \to \infty$. Our choice of $A_{w_0}$ (and hence $N_{z_0}$) was quite arbitrary and it will be useful to impose further conditions. There exists $t_0 \in [\alpha, \beta]$ such that $z_0 = \gamma(t_0)$. We may choose $A_{w_0}$ so that there exist $a, b$ satisfying $0 < a < t_0 < b < l$ so that
$$ S^1 \cap A_{w_0} = \{e^{i \theta(t)} : t \in [a,b] \} $$
and
$$ \gamma \cap N_{z_0} = \gamma([a,b]).$$
This uses the fact that $f(\gamma(t)) = e^{i \theta(t)}$ by definition. These sets satisfy the relation
$$ f( \gamma \cap N_{z_0} ) = S^1\cap A_{z_0}.$$
Furthermore we can impose the condition that $\theta(b) - \theta(a) < \pi/2$.

For convenience, let $A_\delta(\theta_1, \theta_2)$ denote the closed region enclosed by the contour $C_{1-\delta, 1+ \delta, \theta_1, \theta_2}$. We show that there is a $\delta > 0$ small enough and  a partition of $[\alpha, \beta]$ of the form
$$ t_0 < t_1 = \alpha < t_2 < t_3 < \dots < t_m = \beta < t_{m+1} $$
so that $f^{-1}$ has an analytic branch $g_i$ in a neighbourhood of $A_i = A_\delta(\theta(t_i), \theta(t_{i+1}))$ so that the sets $N_i = g_i(A_i)$ for $i = 0,\dots, m$ have the following properties:
\begin{enumerate}
\item The interior of $\cup_i N_i$ is an open set containing $\gamma([\alpha, \beta ])$. The interiors of each $N_i$ are disjoint.
\item For $i = 0,\dots, m$ we have $S^1 \cap A_i = \{e^{i \theta(t)} : t \in [t_i,t_{i+1}] \}$ and  $\gamma \cap N_i = \gamma([t_i,t_{i+1}])$
\item For $i = 0,\dots, m$
$$\# \{ z \in N_i^\circ : F_n(z) = 0 \} = \frac{n(\theta(t_{i+1}) - \theta(t_i))}{2 \pi} + O(1)$$
as $n \to \infty$. The boundary of $N_i$ has $O(1)$ solutions as $n \to \infty$.
\end{enumerate}

Let $z \in \gamma([\alpha, \beta])$. By the previous step there exist subsets of $\C$, denoted by $N_z$ and $A_z$, and real numbers $a_z < b_z$, such that $z \in N_z^\circ$, $A_z = A_{\delta_z}(\theta(a_z), \theta(b_z))$ for some $\delta_z > 0$,
$$ S^1 \cap A_{z} = \{e^{i \theta(t)} : t \in [a_z,b_z] \}, \  \gamma \cap N_{z} = \gamma([a_z,b_z])$$
and $\theta(b_z) - \theta(a_z) < \pi/2$.  There are also open subsets of $\C$, say $U_z$ and $V_z$, such that $N_z \subseteq U_z$, $A_z \subseteq V_z$ and $f : U_z \to V_z$ has analytic inverse $g_z : V_z \to U_z$. Since $\gamma([\alpha, \beta])$ is compact there exist finitely many points, say $z_1, \dots , z_M$, such that
$$ \gamma([\alpha, \beta]) \subseteq \bigcup_{i=1}^M N_{z_i}^\circ $$
Let $P$ be the set formed by removing from
$$ \{a_{z_i}, b_{z_i} : i = 1, \dots, M \} \cup \{\alpha, \beta \} $$
all elements less than $\alpha$, except for the largest such one, and similarly, all elements greater than $\beta$, except for the smallest such one. The set $P$ can be labeled in increasing order so that
$$ t_0 < t_1 = \alpha < t_2 < t_3 < \dots < t_m = \beta < t_{m+1}. $$
Initially let $\delta$ be the minimum of the $\delta_{z_i}$. For $i = 0, \dots, m$ there exists $z_k$ (depending on $i$) such that $[t_i, t_{i+1}] \subseteq [a_{z_k}, b_{z_k}]$. Letting $g_i = g_{z_k}$ we put $A_{i,\delta} = A_\delta(\theta(t_i), \theta(t_{i+1}))$ and $N_{i, \delta} = g_i(A_{i, \delta})$.

It now suffices to show that by reducing $\delta$ if necessary we can ensure that the interiors of the $N_{i, \delta}$ are disjoint. Suppose there exists $i < j$ such that $N_{i, \delta}^\circ \cap N_{j, \delta}^\circ \neq \emptyset$. This implies $\theta(t_{i+1}) < \theta(t_j)$ for if $\theta(t_{i+1}) = \theta(t_j)$ then we would have  $A_{i, \delta}^\circ \cap A_{j, \delta}^\circ =\emptyset$ (because $\theta(t_{j+1}) - \theta(t_i) < \pi$) and so $N_{i, \delta}^\circ \cap N_{j, \delta}^\circ = \emptyset$, a contradiction. Thus $[t_i, t_{i+1}]$ and $[t_j, t_{j+1}]$ are disjoint and hence $\gamma([t_i, t_{i+1}])$ and $\gamma([t_j, t_{j+1}])$ are disjoint. Consequently there exist disjoint open sets $U_i$ and $U_j$ such that $\gamma([t_i, t_{i+1}]) \subseteq U_i$ and $\gamma([t_j, t_{j+1}]) \subseteq U_j$. Now reduce $\delta$ so that $g_i(A_{i, \delta}) \subseteq U_i$ and $g_j(A_{j, \delta}) \subseteq U_j$. This implies $N_{i, \delta} \cap N_{j, \delta} = \emptyset$. We continue this process of reducing $\delta$ until the interiors of the $N_{i, \delta}$ are disjoint. We then put $A_i = A_{i, \delta} = A_\delta(\theta(t_i), \theta(t_{i+1}))$ and $N_i = g_i(A_i)$ and it is clear that these sets satisfy the required properties.

By definition $S_\epsilon$ is the $\epsilon$-neighbourhood of $\gamma([\alpha, \beta])$. There exists $c > 0$ such that for each $i = 1, \dots, m-1$ we have
$$ f( N_i \cap S_\epsilon ) \supseteq A_c(\theta(t_i), \theta(t_{i+1})).$$
Therefore as $n \to \infty$
$$ \# \{ z \in N_i^\circ \cap S_\epsilon : F_n(z) = 0 \} = \frac{n(\theta(t_{i+1}) - \theta(t_i))}{2 \pi} + O(1).$$
Also, for each $\epsilon > 0$ small enough there is a corresponding $c_\epsilon > 0$ such that $c_\epsilon \to 0$ as $\epsilon \to 0$ and such that
$$ f( N_0 \cap S_\epsilon ) \subseteq A_c(\theta(t_1) - c_\epsilon, \theta(t_1))$$
and
$$ f(N_m \cap S_\epsilon) ) \subseteq A_c(\theta(t_m), \theta(t_m) + c_\epsilon) $$
Therefore as $n \to \infty$ we have for $i =0, m$
$$ K_{i, \epsilon}: =  \# \{ z \in N_i^\circ \cap S_\epsilon : F_n(z) = 0 \} \leq  \frac{n c_\epsilon}{2 \pi} + O(1)$$
Consequently,  as $n \to \infty$
\begin{eqnarray*}
 | \{ z : F_n(z) = 0 \} \cap S_\epsilon | & = & \sum_ {i=0}^m \# \{ z \in N_i^\circ \cap S_\epsilon : F_n(z) = 0 \} + O(1) \\
                                & = & K_{0, \epsilon} + K_{m, \epsilon} + \sum_{i = 1}^{m-1} \frac{n(\theta(t_{i+1}) -
                                                \theta(t_{i}))}{2 \pi} + O(1) \\
                            & = & \frac{n(\theta(\beta) - \theta(\alpha))}{2\pi} + K_{0, \epsilon} +
                            K_{1,\epsilon} + O(1)
\end{eqnarray*}
Hence
$$ \frac{\theta(\beta) - \theta(\alpha)}{2 \pi} \leq N_\epsilon \leq \frac{\theta(\beta) - \theta(\alpha)}{2 \pi} + \frac{c_\epsilon}{\pi}$$
and so
$$ \lim_{\epsilon \to 0} \ N_\epsilon = \frac{\theta(\beta) - \theta(\alpha)}{2 \pi} $$
Finally,
$$ \int_\alpha^\beta \rho(s) \  ds = \int_\alpha^\beta \frac{\theta'(s)}{2 \pi} \ ds =  \frac{\theta(\beta) - \theta(\alpha)}{2 \pi} $$
and so
$$  \lim_{\epsilon \to 0} \ N_\epsilon =  \int_\alpha^\beta \rho(s) \ ds $$
as required.
\end{proof}

The above result is sharp in the sense that one cannot let $\alpha = 0$ or $\beta = l_r$. To see this consider the function
$$F_n(z) = f_1(z)^n + f_2(z)^n + f_3(z)^n $$
where $f_1(z) = z^2(z-1)$, $f_2(z) = z-1$ and $f_3(z) = (z-1)^2$. Since $|f_1(1)| = |f_2(1)| = |f_3(1)| = 0$ we note that $1$ does not lie on any of the analytic arcs. The problem is that $F_n(z) = 0$ has $n$ solutions counted with multiplicity at $1$. If $z$ is close to $1$ and satisfies $|z| = 1$ we have
$$|f_1(z)| = |f_2(z)| > |f_3(z)| $$
and so there is an analytic arc $\gamma : (0, l) \to \C$ with endpoint $1$. Without loss of generality suppose that $\gamma(0+) = 1$. If we choose $\beta > 0$ small enough we get
$$ \int_0^\beta \rho(s) \ ds = \frac{1}{2 \pi} (\theta(\beta) - \theta(0)) < 1/2. $$
This makes use of the fact that $\theta$ can be extended continuously to $[0,l]$. For each $\epsilon > 0$ the $\epsilon$-neighbourhood $S_\epsilon$ of $\gamma( (0, \beta] )$ contains $1$ and so $N_\epsilon \geq 1$. Consequently
$$ \liminf_{\epsilon \to 0} \ N_\epsilon > \int_0^\beta \rho(s) \ ds. $$

\begin{corollary} The limit set of $\{z:F_n(z)=0\}$ as $n\to\infty$ equals the union of $L$ and those zeros of $a_r(z)$ that belong to some $B_{r,\delta}$ for some $\delta>0$.
\end{corollary}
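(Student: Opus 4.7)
The plan is to establish both inclusions. Write $\Lambda$ for the limit set, i.e.\ the set of $z_0 \in \C$ admitting a sequence $z_n \to z_0$ with $F_n(z_n) = 0$. Note that $\Lambda$ is automatically closed, as any such limit set is.

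For the inclusion $\Lambda \subseteq L \cup \{z : a_r(z) = 0 \text{ and } z \in B_{r,\delta} \text{ for some } r, \delta > 0\}$, I would take $z_0 \in \Lambda \setminus L$ with $z_n \to z_0$ and $F_n(z_n) = 0$ and show $z_0$ falls in the second set. Choose $\epsilon > 0$ so small that $\overline{B}(z_0, \epsilon)$ is disjoint from $L_\epsilon$, and apply Theorem \ref{mostgeneral} with $K = \overline{B}(z_0, \epsilon)$. For all large $n$ any zero of $F_n$ in $K$ lies either in $L_\epsilon$ (impossible here) or within $\epsilon'$ of a zero of some $a_r$, and the proof of Theorem \ref{mostgeneral} places such a zero in some $B_{r,\delta}$. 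Since there are finitely many pairs (index $r$, root $w_0$ of $a_r$), after passing to a subsequence both $r$ and $w_0$ are fixed and $z_n \to w_0$; hence $z_0 = w_0$ is a zero of $a_r$. Taking the limit of the strict inequality $|f_r(z_n)|(1-\delta) > \max_{s \neq r} |f_s(z_n)|$ gives the weak version $|f_r(z_0)|(1-\delta) \geq \max_{s \neq r} |f_s(z_0)|$, and because $1-\delta < 1$ this strengthens to strict inequality $|f_r(z_0)| > \max_{s \neq r} |f_s(z_0)|$. By continuity, $z_0 \in B_{r, \delta'}$ for some $\delta' \in (0, \delta)$, as required.

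For the reverse inclusion, each zero of $a_r$ that lies in some $B_{r,\delta}$ belongs to $\Lambda$ directly by the remark immediately following Theorem \ref{mostgeneral}. On each analytic arc $\gamma_r$ of Theorem \ref{asympdensity}, positivity of the bounded density $\rho_r$ forces every neighbourhood of any $\gamma_r(s)$ to contain a zero of $F_n$ for all sufficiently large $n$, so $\gamma_r(s) \in \Lambda$. Closedness of $\Lambda$ then yields every point in the closures of the arcs, which together with the endpoint structure established in Theorem \ref{structure} accounts for all of $L$ apart from possibly finitely many points of $D$ that are isolated in $L$. At such an isolated point $z_0$, the local analysis in the proof of Theorem \ref{structure} forces every $f_r$ achieving the common maximum value to vanish simultaneously at $z_0$, for otherwise analytic arcs through $z_0$ would arise. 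Factoring out the common vanishing order $k \geq 1$ as $f_r(z) = (z-z_0)^k g_r(z)$, one obtains $F_n(z) = (z-z_0)^{kn} \sum_r a_r(z) g_r(z)^n$, so $F_n$ vanishes at $z_0$ to order at least $kn$ and $z_0 \in \Lambda$ trivially.

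The main obstacle I anticipate is the limit step that places $z_0$ in the open set $B_{r,\delta'}$ rather than merely in its closure; this is resolved by exploiting the strict factor $(1-\delta) < 1$ in the definition of $B_{r,\delta}$, which creates the room needed for strict inequality to be preserved under the limit. The isolated-point analysis in $D$ is the other somewhat delicate loose end, handled by noticing that isolation forces simultaneous vanishing of the relevant $f_r$.
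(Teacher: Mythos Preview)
The paper states this corollary without proof, so there is no argument to compare against; your proposal supplies the details the paper omits, and is essentially correct.

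One small point deserves tightening in the forward inclusion. You invoke the proof of Theorem~\ref{mostgeneral} to place a zero $z_n \notin L_\epsilon$ both in some $B_{r,\delta}$ and within $\epsilon'$ of a zero of some $a_s$; you then pass to a subsequence and assert that $z_0 = w_0$ is a zero of $a_r$, implicitly identifying the index $s$ (from the nearby zero of $a_s$) with the index $r$ (from $z_n \in B_{r,\delta}$). As written, the proof of Theorem~\ref{mostgeneral} removes the $\epsilon'$-neighbourhood of the zeros of \emph{all} the $a_s$, so the two indices need not agree a~priori. The clean fix is direct: since $z_0 \notin L$, there is a unique $r$ with $|f_r(z_0)| > \max_{s\neq r}|f_s(z_0)|$, hence $z_0 \in B_{r,\delta}$ for some $\delta>0$; if $a_r(z_0)\neq 0$ then on a small neighbourhood $U\subset B_{r,\delta}$ the ratios $a_s/a_r$ are bounded and the estimate in the proof of Theorem~\ref{mostgeneral} yields $F_n\neq 0$ on $U$ for large $n$, contradicting $z_n\to z_0$. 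Thus $a_r(z_0)=0$ with $z_0\in B_{r,\delta}$, as required.

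Your treatment of the reverse inclusion is sound. In particular, the handling of possible isolated points of $L$ is correct: an isolated point forces all $f_r$ to vanish there (otherwise the minimum-modulus principle applied to the non-constant ratio $f_r/f_s$ would produce an arc through it), and factoring out the common order $k\geq 1$ gives $F_n(z_0)=0$ to order at least $kn$, so $z_0\in\Lambda$. The remaining points of $D$ are endpoints of the arcs $\gamma_j$ and lie in $\Lambda$ by closedness, as you say.
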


\begin{example}
Given $c>0$, the solutions of the equation $ (z^2-1)^n=c$ are given explicitly by

$$z_r=\pm \left\{1+c^{1/n} e^{2\pi ir/n}\right\}^{1/2}$$
where $1\leq r\leq n$. Putting $f(z)=z^2-1$, the solutions converge as $n\to\infty$ to $ L=\{ z:|f(z)|=1\} $. As in Theorem \ref{structure} one may parameterize $L$ by $\gamma(u)=\pm \sqrt{1+ e^{iu}}$ where $-\pi < u < \pi$. For this initial parameterization one has $f(\gamma(u))= e^{iu}$, $\theta(u)=u$, $\rho(u)=1/(2\pi)$ and $\int_{-\pi}^\pi \rho(u)\, d u=1$. If instead one parametrizes the curve $\gamma$ by arc length using the formula
$$\frac{d s}{d u}= |\gamma'(u)|=|1+ e^{iu}|^{-1/2}$$
then one sees that the density of the zeros vanishes at $z=0$ because $s'(u) \to \infty$ as $u \to \pm \pi$.
\end{example}

\section{Spectrum of a Directed Graph}

In this section we introduce some relevant notions from graph theory and use them to study the adjacency matrix of a directed graph. As well as providing a simple introduction to the general theory, this case has some features of its own that are of interest.

Let $\cG$ denote the class of finite, directed graphs $(S,\to)$ that are irreducible in the sense that for every $u,\, v\in S$ there exists a path $u=s_1\to s_2\to ...\to s_n=v$. The irreducibility assumption implies that the indegree and outdegree of every vertex of $S$ is at least $1$.

The adjacency matrix $A$ of $(S,\to)\in\cG$ is defined by
$$
A_{i,j}=\left\{ \begin{array}{ll}%
            1&\mbox{if $i\to j$,}\\
            0&\mbox{otherwise.}
            \end{array}\right.
$$
The spectrum of $(S,\to)$ is by definition the spectrum of $A$.

We define $C$ to be the set of all $s\in S$ that have total degree $2$. If $a\in C$ and $b\to a\to e$ then irreducibility implies that $b\not= a\not= e$ if $\#(S)>1$. If $C=S$ then $S$ is a cycle and
$$\Spec(A)=\{ e^{2\pi i r/n}:r=1,2,...,n\}$$
where $n=\#(S)$. We henceforth assume that $C$ and $J=S\backslash C$ are both non-empty.

The set $C$ can be written as the union of disjoint `channels' $C_i$, which we define as subsets $T$ of $S$ that can be identified with $\{b=1,2,...,e-1,e\}$ in such a way that
\begin{enumerate}
\item every $x\in T$ has outdegree $1$; if $x<e$ then $x\to x+1$; moreover $e \to \tilde{e}\in J$;
\item every $x\in T$ has indegree $1$; if $x>1$ then $x-1\to x$; moreover $J \ni \tilde{b}\to b$.
\end{enumerate}
We write $C=\bigcup_{i=1}^h C_i$ where $C_i$ is a channel of length $e_i$ with ends $\tilde{b}_i,\, \tilde{e}_i\in J$.

The structure of $J$ may be quite complex, but every $x \in J$ has total degree at least $3$. We may write $J$ as the disjoint union of subsets $\{J_i\}_{i=1}^k$ that are connected with respect to the \emph{undirected} graph structure inherited from $(S,\to)$. It follows that there are no directed edges joining different subsets $J_i$, which we call junctions. Irreducibility implies that one can pass from any junction to another, but only via intermediate channels. In some cases one may only be able to pass between different points of the same junction via external channels.

We now lengthen each channel $C_i$ by the factor $n$ without changing its end-points $\tilde{b}_i, \, \tilde{e}_i$ or the junctions $J_i$ to produce a new graph $(S^{(n)},\to)\in\cG$,
with a corresponding adjacency matrix $A^{(n)}$. We will prove that for every $\epsilon>0$ the spectrum of $S^{(n)}$ is almost entirely confined to an annulus of the form $\{z:1-\epsilon<|z|<1+\epsilon\}$ if $n$ is sufficiently large. There may however be a small number of eigenvalues far from the unit circle. In a later section we will show that the corresponding eigenvectors are
concentrated around the set $J$. In general they are concentrated around a single junction, but this may not happen if the graph has certain symmetries.

Our general analysis depends upon collapsing each channel $C_r$ to a single point $p_r$, so that we are left with the set
$$
\tilde{S}= J\cup\{p_r:1\leq r\leq h\}.
$$
If $i,\, j\in J$ then we write $i\to j$ in $\tilde{S}$ if and only if $i\to j$ in $S$; we also include the extra edges $\tilde{b}_r\to p_r$ and $p_r\to \tilde{e}_r$. $\tilde{A}$ is defined to be the adjacency matrix of $(\tilde{S},\to)\in\cG$. Note that $(\tilde{S},\to)$ and $\tilde{A}$ depend neither on $n$ nor on the lengths $e_r$ of the original channels.

\begin{theorem}\label{polypencil} The eigenvalues of $A^{(n)}$ coincide with the roots of the polynomial pencil $D(z)-\tilde{A}$ on $\tilde{S}$, where $D(z)$ is the diagonal matrix
with entries
$$
D_{i,j}(z)=\left\{ \begin{array}{ll}
        z       &\mbox{if $i=j\in J$,}\\
        z^{ne_r}    &\mbox{if $i=j=p_r$ for some $r\in \{1,\dots,h\}$,}\\
        0       &\mbox{otherwise.}
\end{array}\right.
$$
Indeed
\begin{equation}
\det(zI-A^{(n)})=\det(D(z)-\tilde{A})\label{detidentity1}
\end{equation}
for all $z\in\C$.
\end{theorem}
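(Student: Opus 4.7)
The plan is to prove the determinant identity (\ref{detidentity1}), after which the eigenvalue statement follows immediately, since the eigenvalues of $A^{(n)}$ are precisely the roots of $z \mapsto \det(zI - A^{(n)})$ counted with multiplicity. Both determinants will be evaluated by the same Schur complement argument, applied to the block decomposition separating junction vertices from channel vertices.

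Order the vertices of $S^{(n)}$ with $J$ first and the channels $C_1^{(n)}, \dots, C_h^{(n)}$ listed sequentially afterwards, each in its natural order. Then
$$zI - A^{(n)} = \begin{pmatrix} zI_J - A_{JJ} & -A_{JC} \\ -A_{CJ} & zI_C - A_{CC}\end{pmatrix}.$$
Because each non-terminal vertex of a channel has a unique successor, $zI_C - A_{CC}$ is block diagonal with one block per channel; the block $M_{ne_r}(z)$ for $C_r^{(n)}$ is upper triangular of size $ne_r$ with $z$'s on the diagonal and $-1$'s on the superdiagonal. Writing $M_\ell(z) = z(I - z^{-1}N)$ with $N$ the nilpotent shift, the geometric series gives $M_\ell(z)^{-1}[k,l] = z^{-(l-k+1)}$ for $l \geq k$; in particular the (first, last) entry equals $z^{-ne_r}$ and $\det M_{ne_r}(z) = z^{ne_r}$. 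The nonzero entries of $A_{JC}$ correspond to edges from $\tilde b_r$ to the first vertex of $C_r^{(n)}$, while those of $A_{CJ}$ correspond to edges from the last vertex of $C_r^{(n)}$ to $\tilde e_r$. For $z \neq 0$ the Schur complement formula therefore yields
$$\det(zI - A^{(n)}) = z^{n\sum_r e_r}\det\bigl(zI_J - A_{JJ} - T(z)\bigr),$$
where $T(z)$ is the matrix on $J$ with $T(z)_{ij} = \sum_{r:\, \tilde b_r = i,\ \tilde e_r = j} z^{-ne_r}$.

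Performing the analogous computation on $D(z) - \tilde A$ with $J$ and $\{p_1, \dots, p_h\}$ as the two blocks gives
$$D(z) - \tilde A = \begin{pmatrix} zI_J - A_{JJ} & -E \\ -F & Z\end{pmatrix}, \qquad Z = \mathrm{diag}(z^{ne_1}, \dots, z^{ne_h}),$$
with $E$ and $F$ the indicator matrices for the edges $\tilde b_r \to p_r$ and $p_r \to \tilde e_r$ respectively. Schur complement again yields $\det(D(z) - \tilde A) = z^{n\sum_r e_r}\det(zI_J - A_{JJ} - EZ^{-1}F)$, and a direct entry-wise computation shows $(EZ^{-1}F)_{ij}$ equals exactly $T(z)_{ij}$. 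Hence (\ref{detidentity1}) holds for every $z \neq 0$, and extends to all $z \in \C$ because both sides are polynomials. The only nontrivial input is identifying the relevant entry of $M_{ne_r}(z)^{-1}$, which follows immediately from the nilpotent expansion; everything else is block-matrix bookkeeping.
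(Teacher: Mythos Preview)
Your proof is correct, but it takes a genuinely different route from the paper's. The paper proves a slightly more general determinant identity (allowing arbitrary diagonal values $\alpha_k$ and off-diagonal weights $\beta_k,\epsilon_k$ in the channel blocks, anticipating the general model of a later section) by induction on the number $h$ of channels: it expands $\det(M)$ down the first column, peeling off one channel block at a time via repeated cofactor expansion, and matches the result term by term with the corresponding expansion of $\det(\tilde M)$. Your Schur complement argument is more conceptual: it shows directly that both determinants factor as $z^{n\sum_r e_r}$ times the determinant of the \emph{same} Schur complement $zI_J - A_{JJ} - T(z)$ on the junction set, which explains structurally why collapsing each channel to a single vertex with diagonal entry $z^{ne_r}$ preserves the characteristic polynomial. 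The paper's approach is more elementary (only cofactor expansion and induction) and packages the generalization needed later; yours is shorter, exposes the mechanism more transparently, and would generalize just as easily by replacing $z$ with $(z-\alpha_r)/\beta_r$ in the channel-block inverse.
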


\begin{proof} It suffices to prove the following more general result which will also be of use later on.

Let $M$ be a matrix of the form
$$ M = \left( \begin{array}{ccccc}
            C_1         &       &       &       & E_1   \\
                    & C_2   &       &       & E_2   \\
                    &       & \ddots    &       & \vdots    \\
                    &       &       & C_h   & E_h   \\
            B_1     & B_2   & \cdots    & B_h   & J
        \end{array}
    \right )
$$
where $C_k$ is the $n_k \times n_k$ matrix
$$ \left ( \begin{array} {cccc}
            \alpha_k    & -1        &       &       \\
                    & \alpha_k  & \ddots        &       \\
                    &       & \ddots    & -1        \\
                    &       &       & \alpha_k
        \end{array}
    \right )
$$
with $\alpha_k$ on the diagonal, $-1$ directly above the diagonal and $0$ elsewhere; $J$ is an arbitrary $m \times m$ matrix; $B_k$ is an $m \times n_k$ matrix with 0 everywhere except possibly for entry $(i_k, 1)$ where it has the value $\beta_k$; and $E_k$ is an $n_k \times m$ matrix with $0$ everywhere except possibly for entry $(n_k, j_k)$ where it has the value $\epsilon_k$. It is clear that the vertices of the graph $(S^{(n)}, \to)$ can be labeled so that the corresponding matrix $zI - A^{(n)}$ has the above form.

On the set of matrices of the above form we may define a map $M \mapsto \tilde{M}$ by defining the corresponding matrix $\tilde{M}$ by
$$ \tilde{M} = \left ( \begin{array}{ccccc}
            \alpha_1^{n_1}      &           &           &           & E'_1\\
                            & \alpha_2^{n_2}&           &           & E'_2\\
                            &           & \ddots        &            & \vdots \\                                        &           &           & \alpha_h^{n_h} & E'_h \\
            B'_1            & B'_2  & \cdots        & B'_h      & J
            \end{array}
        \right )
$$
where $B'_k$ is the $m \times 1$ column vector with value $\beta_k$ in position $i_k$ and $0$ elsewhere; $E'_k$ is the $1 \times m$ row vectors with value $\epsilon_k$ in position $j_k$ and $0$ elsewhere;  and where the $1\times 1$ matrix $(a_k^{n_k})$ replaces the matrix $C_k$.  It is clear that the vertices of the graph $(\tilde{S}, \to)$ can be labeled so that the corresponding matrix $D(z) - \tilde{A}$ has the above form.

It now suffices to show that $\det(M) = \det(\tilde{M})$. We prove the result by induction on $h$ which is the number of matrices $C_k$ along the diagonal of $M$. The result clearly holds for $h=0$ and for any size matrix $J$. For the inductive case, let $N = n_1 + \dots + n_h$ and let $K$ be the matrix
$$ K = \left( \begin{array}{cccc}
            C_2 &       &       & E_2   \\
                & \ddots    &       & \vdots    \\
                &       & C_h   & E_h   \\
             B_2    & \cdots    & B_h   & J
        \end{array}
    \right )
$$
Expanding the determinant of $M$ down the first column gives
\begin{eqnarray*}
\det(M) & = & \alpha_1 \det(M_{1,1}) + (-1)^{N+ i_1 +1} \beta_1 \det(M_{N+i_1,1}) \\
        & = &  \alpha_1^{n_1} \det(K) + (-1)^{N + i_1 + 1}\beta_1(-1)^{n_1 - 1}
            (-1)^{N - n_1 + j_1 + 1} \epsilon_1 \det(K_{N - n_1 + i_1, N - n_1 + j_1}) \\
        & = & \alpha_1^{n_1} \det(K) + (-1)^{i_1 + j_1 + 1} \beta_1 \epsilon_1 \det(K_{N - n_1 + i_1, N - n_1 + j_1})
\end{eqnarray*}
where $\det(M_{1,1})$ is evaluated by expanding successive minors down the first column and $\det(M_{N+i_1, 1})$  is evaluated by expanding successive minors across the first row. Similarly,
\begin{eqnarray*}
\det(\tilde{M}) & =  & \alpha_1^{n_1} \det(\tilde{K}) + (-1)^{h+i_1+1} \beta_1 (-1)^{h-1+j_1+1}\epsilon_1
                \det( \tilde{K}_{h -1+i_1, h -1 + j_1})  \\
             & = & \alpha_1^{n_1} \det(\tilde{K}) + (-1)^{i_1 + j_1 +1} \beta_1 \epsilon_1 \det(\tilde{K}_{h -1+i_1, h -1 + j_1})
\end{eqnarray*}
By the induction hypothesis
$$ \det(K) = \det( \tilde{K} ) $$
$$ \det( K_{N - n_1 +i_1, N - n_1 + j_1} ) = \det( \tilde{K}_{h -1+i_1, h -1 + j_1}) $$
and so $\det(M) = \det( \tilde{M})$, as required.
\end{proof}

\begin{corollary} There exists a polynomial $p$ of the form
$$p(z,w)=a_c(z)w^c+a_{c-1}(z)w^{c-1}+\dots+a_1(z)w+a_0(z)$$
such that
\begin{equation}
\det(zI-A^{(n)})=p(z,z^n).\label{detidentity2}
\end{equation}
Moreover $c=\#(C)$. All of the polynomials $a_r$ have
degree at most $\#(J)$ and $a_c$ has degree equal to
$\#(J)$.
\end{corollary}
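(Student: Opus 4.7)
The plan is to combine Theorem~\ref{polypencil} with a Leibniz expansion of $\det(D(z)-\tilde{A})$, after which the statement reduces to a bookkeeping argument on permutations.

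First I would order the vertices of $\tilde{S}$ so that the channel-representatives $p_1,\dots,p_h$ come before the junction vertices, giving the block form
$$
D(z)-\tilde{A}=\begin{pmatrix}\operatorname{diag}(z^{ne_1},\dots,z^{ne_h}) & -E \\ -B & zI_J-\tilde{A}|_J\end{pmatrix},
$$
where $E$ and $B$ have entries in $\{0,1\}$ (independent of $z$) encoding the channel-to-junction adjacencies. Introducing $w=z^n$, the top-left diagonal becomes $\operatorname{diag}(w^{e_1},\dots,w^{e_h})$. Expanding by the Leibniz formula, each permutation $\sigma$ of the vertex set contributes a factor $w^{e_r}$ for every channel index $r$ it fixes, and nothing else involving $w$, because in the first $h$ rows the only non-constant entries sit on the diagonal. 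Writing $T(\sigma)$ for the set of channel points fixed by $\sigma$ and $k(\sigma)=\sum_{r\in T(\sigma)}e_r$, I would group the Leibniz terms by the value of $k(\sigma)$ to obtain
$$\det(D(z)-\tilde{A})=\sum_{k=0}^{c}a_k(z)\,w^k,\qquad c=\sum_{r=1}^h e_r=\#(C).$$

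Next I would bound the $z$-degrees. Contributions to a Leibniz term coming from $-E$, $-B$, or from off-diagonal entries of $-\tilde{A}|_J$ are constants, so the only source of $z$'s is the set of diagonal positions of the $J$-block fixed by $\sigma$; there are at most $\#(J)$ such positions, yielding $\deg a_k\leq\#(J)$ for every $k$. For $k=c$ one needs $T(\sigma)=\{1,\dots,h\}$, so the relevant $\sigma$ restrict to arbitrary permutations of $J$, and summing their signed contributions gives $a_c(z)=\det(zI_J-\tilde{A}|_J)$, a monic polynomial of degree exactly $\#(J)$. Defining $p(z,w)=\sum_{k=0}^{c}a_k(z)w^k$ and substituting $w=z^n$ then produces identity~(\ref{detidentity2}) via~(\ref{detidentity1}).

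I do not expect any genuine obstacle here: the argument is mostly a careful audit of which permutations contribute to each power of $w$ and a verification that only diagonal entries of the $J$-block can raise the $z$-degree. One temptation worth avoiding is trying to identify $p(z,w)$ through a block Schur-complement formula; that would succeed but is heavier machinery than the stated degree bounds require.
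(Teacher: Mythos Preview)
Your proposal is correct and follows essentially the same route as the paper: invoke Theorem~\ref{polypencil}, substitute $w=z^n$, and read off the degrees in $w$ and $z$ from the Leibniz (permutation) expansion of $\det(D(z)-\tilde{A})$. Your write-up is in fact slightly more explicit than the paper's---you identify $a_c(z)=\det(zI_J-\tilde{A}|_J)$ rather than merely pointing to the identity-permutation term---but the underlying argument is the same.
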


\begin{proof} Theorem \ref{polypencil} implies $\det(z - A^{(n)})  = \det(D(z) - \tilde{A})$, and after the change of variable $w = z^n$ we note that
$$ \det(D(z) - \tilde{A})  = \left| \begin{array}{ c c c c c}
                    w^{e_1} &       &       &       & E'_1  \\
                            & w^{e_2} &     &       & E'_2  \\
                            &       & \ddots    &       & \vdots    \\
                            &       &       & w^{e_h}   & E'_h  \\
                    B'_1        & B'_2  & \dots & B'_h  & zI-J
                    \end{array}
                \right|
$$
Using the permutation definition of the determinant we see that the highest order power of $w$ corresponds to the term in the determinant that includes all of the terms $w^{e_r}$, and the power of $w$ is then $e_1 + \dots + e_h = c$. The term in the determinant arising from the identity permutation shows that $a_c(z)$ has degree equal to $\#(J)$. It is clear that this is the maximum degree of the polynomials $a_r(z)$.
\end{proof}

It is easy to construct numerical examples which illustrate the asymptotic spectral behaviour proved in Lemma~\ref{asympcircle}, but we give a number of more general examples below.

\section{The General Model}

We now turn to the general model described in the introduction. Given a matrix $A$ we construct the associated graph and define the channels and junctions as described before, except for the following variations. We allow every point $x$ in a channel to have an extra edge which starts and ends at that point provided the diagonal matrix entry $A_{x,x}$ is the same for all points in the channel. We also require that all the non-zero matrix entries $A_{x,y}$ associated with a particular channel, including the two that link it to external junctions, should be equal. Thus every channel $C_i$ is determined by three parameters, its length $e_i$, the diagonal entries $\alpha_i$, which may or may not vanish, and the non-zero, off-diagonal entries $\beta_i$ associated with edges that link successive points in the channel.

Given a positive integer $n$ we now construct a new matrix $A^{(n)}$ which has the same graph structure and matrix entries as $A$, except that the length of each channel is $ne_i$ instead of $e_i$. We are interested in the asymptotic behaviour of the eigenvalues of $A^{(n)}$ as $n\to\infty$. The method of Theorem~\ref{polypencil} implies that
\begin{equation}
p_n(z):=\det(zI-A^{(n)})=k\sum_s a_s(z)\prod_{i\in s} ((z-\alpha_i)/\beta_i)^{ne_i}\label{pgeneral}
\end{equation}
where $k=\prod_{i=1}^h \beta_i^{ne_i}$, the sum is over all subsets $s$ of $\{1,2,\dots,h\}$ and each $a_i$ is a polynomial of degree at most $d=\sum_{j=1}^k \#(J_j)$. If $\sigma= \{1,2,\dots,h\}$ then $a_\sigma$ is of degree $d$. The other $a_s$ may be of lower degree and may vanish identically. The zeros of $p_n(z)$ coincide with those of
$$ F_n(z) = \sum_s a_s(z) f_s(z)^n $$
where
$$f_s(z) = \prod_{i\in s} ((z-\alpha_i)/\beta_i)^{e_i} $$
and so the spectral asymptotics are determined by Theorem~\ref{mostgeneral}.

Since there are $2^h$ subsets of $\{1, 2, \dots, h\}$ one would expect the number of terms in $p_n(z)$ to grow exponentially as the number of channels is increased. Surprisingly this often does not occur as many of the $a_s$ may vanish identically. We will prove this for the case when each junction consists of a single point. We will need the following coefficient theorem for directed graphs; see \cite[Theorem~1.2]{sachs}. Because of the presence of the term $zI$ we allow $i\to i$ to be an edge for all $i\in S$ whether or not it was previously. A single point of $S$ is regarded as a cycle of length $1$.

\begin{theorem}\label{cycles}
If $s \subseteq \{1,2,\dots, h\}$ and $a_s$ is non-zero then after removing every channel $C_i$ such that $i\in s$ from the graph it is possible to cover the remainder by disjoint cycles such that each channel $C$ not in $s$ is completely contained in one of the cycles.
\end{theorem}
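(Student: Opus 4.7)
My plan is to use the Leibniz expansion of $\det(zI-A^{(n)})$ as a signed sum over permutations $\pi$ of $S^{(n)}$, equivalently over disjoint cycle covers of the graph augmented with a self-loop at every vertex, and then to exploit a structural dichotomy that every channel is forced to obey in any such cycle cover.

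The dichotomy claims that for every permutation $\pi$ of $S^{(n)}$ and every channel $C_i^{(n)}$, either all vertices of $C_i^{(n)}$ are fixed points of $\pi$, or the entire channel appears as a directed subpath of a single non-trivial cycle of $\pi$, entering at $\tilde{b}_i$ and leaving for $\tilde{e}_i$. Indeed, at an interior vertex $x$ of $C_i^{(n)}$ the only out-neighbours are $x$ and $x+1$, and the only in-neighbours are $x$ and $x-1$, so $\pi(x)\in\{x,x+1\}$ and $\pi^{-1}(x)\in\{x,x-1\}$. If $\pi(x)=x+1$ then $\pi^{-1}(x)\ne x$, which forces $\pi^{-1}(x)=x-1$, and the same reasoning applied at $x\pm 1$ propagates the non-trivial behaviour inductively to the two ends of the channel.

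Given the dichotomy, each $\pi$ determines the subset $s(\pi)\subseteq\{1,\dots,h\}$ of indices of channels that $\pi$ fixes pointwise, and each non-trivial cycle of $\pi$ traverses some (possibly empty) collection of non-$s(\pi)$ channels in full together with a non-empty sequence of junction vertices. Grouping the Leibniz sum by the value of $s(\pi)=s$ and extracting, from each channel vertex, a factor of $z-\alpha_i$ (fixed channels) or $-\beta_j$ (traversed channels) yields, after routine bookkeeping of the signs that depend on cycle length,
\[
\det(zI-A^{(n)})=\sum_s\left(\prod_{i\in s}(z-\alpha_i)^{ne_i}\right)\left(\prod_{j\notin s}\beta_j^{ne_j}\right)R_s(z),
\]
in which $R_s(z)$ is a signed sum over the cycle covers of the reduced graph $J\cup\bigcup_{j\notin s}C_j$ for which every non-$s$ channel lies entirely inside a single cycle. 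Comparison with (\ref{pgeneral}) identifies $R_s$ with $a_s$.

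If no such cycle cover of the reduced graph exists, the index set of $R_s$ is empty, so $R_s\equiv 0$ and hence $a_s\equiv 0$; this is the contrapositive of the statement. The main obstacle will be the sign and exponent bookkeeping in the above factorisation: the lengths of the cycles of $\pi$ depend on $n$ through the channel vertex counts $ne_j$, and the $n$-dependent signs they contribute must cancel exactly against the $(-1)$ factors coming from the off-diagonal entries $-\beta_j$ of $zI-A^{(n)}$, so that $R_s(z)$ really is independent of $n$ and really has the interpretation as a reduced-graph cycle cover sum claimed above.
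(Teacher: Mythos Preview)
Your approach is essentially identical to the paper's: both expand $\det(zI-A^{(n)})$ as a signed sum over permutations, establish the same channel dichotomy (each channel is either fixed pointwise or traversed in full by a single cycle of $\pi$), group terms by the set $s(\pi)$ of fixed channels, and conclude by contraposition that an empty index set forces $a_s\equiv 0$. The paper's argument is terser and does not spell out the in/out-neighbour reasoning you give for the dichotomy, but the content is the same. Your worry about the $n$-dependent sign bookkeeping is not actually needed for the theorem as stated: once you know that every Leibniz term with $s(\pi)=s$ contributes to the coefficient of $\prod_{i\in s}(z-\alpha_i)^{ne_i}\prod_{j\notin s}\beta_j^{ne_j}$ and no other term does, the vanishing of $a_s$ when no suitable cycle cover exists is immediate, regardless of how the signs resolve.
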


\begin{proof} We use the formula
\begin{equation}
p(z)=\sum_{\pi}\sign(\pi) \prod_{i\in
S}A_{z,i,\pi(i)}\label{permdet}
\end{equation}
where $A_z=zI-A$ and the sum is over all permutations $\pi$. We note that the term $t_\pi =\prod_{i\in S}A_{z,i,\pi(i)}$ is non zero if and only if  $(i,\pi(i))$ is a directed edge of $S$ for every $i$. Every $\pi$ may be written as a product of disjoint cycles, some of which may be of length $1$. If one point of a channel $C$ is a cycle of length $1$ then the same applies to every point of $C$ and $t_\pi$ contains the factor $((z-\alpha_i)/\beta_i)^{n_i}$. Otherwise the contribution of the channel $C$ to $t_\pi$ is a non-zero constant. The polynomial $a_s$ is therefore obtained by summing over those partitions of $S$ into disjoint cycles such that every point of every channel in $s$ is a singleton cycle, and each channel not in $s$ is a part of a cycle of length greater than $1$. Since $a_s$ is non-zero such a partition actually exists.
\end{proof}

\begin{corollary}\label{balance} Suppose each junction in $S$ consists of a single point. Let $d$ be the number of junctions and let $h$ be the number of channels. If $N$ is the number of subsets s of $\{1, \dots, h \}$ such that $a_s$ is non zero then $N = O(h^d)$ as $h\to \infty$.
\end{corollary}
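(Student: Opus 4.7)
The plan is to combine Theorem~\ref{cycles} with a simple degree count to show that any $s\subseteq\{1,\ldots,h\}$ with $a_s\neq 0$ must satisfy $|s|\geq h-d$; the estimate $N\leq \sum_{k=0}^d\binom{h}{k}=O(h^d)$ then follows immediately, since this sum is bounded by $(d+1)\binom{h}{d}$, which grows like $h^d$ as $h\to\infty$ with $d$ fixed.

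First I would fix such an $s$ and use Theorem~\ref{cycles} to produce a cover of $S$ (after removing the channels $C_i$ with $i\in s$) by vertex-disjoint cycles, with the property that every channel $C_j$ with $j\notin s$ sits inside exactly one of these cycles. Since by hypothesis each junction is a single vertex, and the cycles are vertex-disjoint, each junction belongs to at most one cycle; inside that cycle exactly one edge enters and exactly one edge leaves the junction.

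The key step is to translate this into a degree constraint on a projected graph. Contract each channel $C_j$ with $j\notin s$ to a single directed edge between its endpoint junctions; this produces a directed multigraph $H$ on the $d$ junction vertices with exactly $h-|s|$ edges (loops and parallel edges are permitted). The previous paragraph shows that at every junction the out-degree in $H$ is at most $1$, because the unique cycle containing that junction uses at most one outgoing channel edge, and any other outgoing channel not in $s$ would have to be in some cycle through that junction, violating vertex-disjointness. Summing out-degrees over the $d$ junctions, the total number of edges of $H$ is at most $d$, hence $h-|s|\leq d$ and $|s|\geq h-d$.

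The number of subsets of $\{1,\ldots,h\}$ of cardinality at least $h-d$ is $\sum_{k=0}^d\binom{h}{k}=O(h^d)$, giving the claimed bound on $N$. The one point to watch is that a junction may lie in no non-trivial cycle of the cover, appearing instead as a singleton cycle via the diagonal $z$ contributed by $zI$; such a junction simply contributes zero edges to $H$, which is consistent with (and only strengthens) the out-degree bound, so this case requires no extra argument. I do not anticipate any real obstacle here, as the combinatorial content is already packaged into Theorem~\ref{cycles}; the only work is the degree-counting translation.
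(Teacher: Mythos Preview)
Your proposal is correct and follows essentially the same route as the paper's proof. Both arguments invoke Theorem~\ref{cycles} to conclude that in the disjoint cycle cover each single-vertex junction can have at most one surviving in-channel and one surviving out-channel (or none), so at most $d$ channels lie outside $s$; the bound $N\le\sum_{k=0}^d\binom{h}{k}=O(h^d)$ then follows by counting. Your contraction to the multigraph $H$ and the out-degree argument is just a slightly more explicit way of phrasing the paper's observation that ``for each junction in $G_s$ we either have precisely one in channel and one out channel, or else we have no channels at all.''
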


\begin{proof} Let $G_s$ denote the graph obtained from $S$ by removing each channel $C_i$ for $i\in s$. If $G_s$ can be covered by disjoint cycles such that each channel $C$ not in $s$ is contained in a single cycle then for each junction in $G_s$ we either have precisely one in channel and one out channel, or else we have no channels at all. So we are now left with a counting problem. The number of subsets $s$ such that \emph{every} junction in $G_s$ has precisely one in channel and one out channel is at most $\binom{h}{d}$ since each possibility requires precisely $d$ channels, and we have $h$ from which to choose. Similarly, the number of subsets $s$ such that $k$ junctions in $G_s$ have precisely one in channel and one out channel with the remaining junctions having no channels is at most $\binom{h}{k}$. Consequently,
$$N \leq \binom{h}{0} + \binom{h}{1} + \dots + \binom{h}{d} = O(h^d) $$
as $h \to \infty$, as required.
\end{proof}

Despite the above result, there are examples for which $a_s$ is non zero for all $2^h$ subsets $s$ of $\{1,\dots, h\}$. Let $S$ be a graph consisting of one junction and $h$ channels that satisfy the following properties. The junction $J$ is a cycle of length $2h$ with
$$1 \to 2 \to 3 \to \dots \to 2h-1 \to 2h \to 1$$
For $i =1, \dots, h$ the channel $C_i$ begins at $i$ and ends at $i+1$ so that
$$ i \to C_i \to i+1$$
The length of each channel is not important. If $s$ is a subset of $\{1, \dots, h \}$ then the only way of covering the graph $G_s$ by disjoint cycles so that each channel $C$ not in $s$ is contained in a single cycle is by using the unique cycle in $G_s$ that contains all the channels. Therefore each $a_s$ is actually a non zero constant.

\section{Some Examples}

The matrices $A^{(n)}$ considered in this paper may be classified according to the number of channels $h$ and the number of junctions $k$. In this section we consider some of the phenomena that arise for small values of $h$ and $k$, our goal being to describe the asymptotic form of $\Spec(A^{(n)})$ as $n\to\infty$. Since each junction has at least one in channel and one out channel by irreducibility, it follows that $k\leq h$. Even if we assume for simplicity that each junction consists of a single point and that each channel has the same length $n$, there are still several graphs for each choice of $h$ and $k$.

\begin{example}[$h=k=1$] We suppose that the graph has one junction containing two points and one channel containing $n-2$ points. The $n\times n$ matrix is taken to have the form
$$
A=\left\{ \begin{array}{ll}%
1&\mbox{if $j=i+1$,}\\
a&\mbox{if $i=j=1$,}\\
b&\mbox{if $i=1$ and $j=n$,}\\
c&\mbox{if $j=1$ and $i=n$,}\\
d&\mbox{if $i=j=n$,}\\
0&\mbox{otherwise.}
\end{array}\right.
$$
in which the constants $a,b,c,d$ describe boundary conditions at the ends of the ordered interval $\{1,2,...,n\}$. One may check that the characteristic polynomial of $A$ is
$$
p_n(z)=z^{n-2}\left( z^2-(a+d)z+ad-bc\right) - c.
$$
The roots of this polynomial converge asymptotically to the unit circle, with the exception of isolated roots that converge to any solution of $z^2-(a+d)z+ad-bc=0$ that satisfies $|z|>1$.
\end{example}

\begin{example}[$h=2$, $k=1$] We consider the case in which the graph associated to $A$ has only one junction and that junction contains only one point. We also assume that there are $h=2$ channels, each of which has length $n$. The matrix $A$ therefore has $2h+1=5$ free parameters apart from $n$. We will see that all the anti-Stokes curves are circles, where we regard straight lines as circles with infinite radii. For $n=3$ the matrix $A$ is of the form
$$
A = \left( \begin{array}{ccc|ccc|c}
        \alpha_1    & \beta_1       &       &           &       &       &    \\
                & \alpha_1  & \beta_1   &           &       &       &   \\
                &           & \alpha_1&             &       &       & \beta_1 \\ \hline             &           &        & \alpha_2     & \beta_2 &     &       \\
                &           &       &           & \alpha_2 & \beta_2 &       \\
                &           &       &           &       & \alpha_2 & \beta_2 \\ \hline
        \beta_1     &           &       & \beta_2       &       &        & \gamma
        \end{array}
    \right)
$$
where we need to assume that $\beta_1$ and $\beta_2$ are non-zero. For general $n$ the reduced matrix is of the form
$$
A(z) = \left( \begin{array}{ccc}
        (z-\alpha_1)^n      &               &   - \beta_1^n     \\
                        & (z-\alpha_2)^n    &   -\beta_2^n  \\
        \beta_1         & \beta_2           &   z-\gamma
        \end{array}
    \right )
$$
and the characteristic polynomial is
$$
p(z)=(z-\gamma)(z-\alpha_1)^n(z-\alpha_2)^n-\beta_1^{n+1}(z-\alpha_2)^n
-\beta_2^{n+1}(z-\alpha_1)^n.
$$
This may be simplified further if $\alpha_1=\alpha_2$ and we assume that this is not the case.

The characteristic polynomial is of the form $\sum_r a_r(z) f_r(z)^n$ where
\begin{eqnarray*}
f_1(z)&=& (z-\alpha_1)(z-\alpha_2)\\
f_2(z)&=& \beta_1(z-\alpha_2)\\
f_3(z)&=& \beta_2(z-\alpha_1)
\end{eqnarray*}
and $a_1(z) = z-\gamma$, $a_2(z) = -\beta_1$, $a_3(z) = -\beta_2$. Each of the functions $f_i$ dominates the others in absolute value in an open set $U_i$, where $U_1$ contains all large enough $z$, $\alpha_1\in U_2$ and $\alpha_2\in U_3$. The limit set $E$ of the spectrum of $A$ as $n\to\infty$ is contained in the union of the anti-Stokes lines, with the possible exception of an eigenvalue that converges to $\gamma$. Whether or not this eigenvalue exists depends on the parameters of the matrix.

The anti-Stokes lines are defined for $i\neq j$ by
$$
K_{i,j}=\{z:|f_i(z)|=|f_j(z)|\}
$$
and are given by the formulae
\begin{eqnarray*}
K_{1,2}&=& \{z:|z-\alpha_1|=|\beta_1|\}\\
K_{1,3}&=& \{z:|z-\alpha_2|=|\beta_2|\}\\
K_{2,3}&=& \{z:|\beta_1|\,|z-\alpha_2|=|\beta_2|\,|z-\alpha_1|\}
\end{eqnarray*}
An elementary calculation shows that $K_{2,3}$ is a circle with centre $(\delta\alpha_2-\alpha_1)/(\delta-1)$, where $\delta=|\beta_1/\beta_2|^2$. If $\delta=1$ then $K_{2,3}$ is a straight line. The three circles are part of a coaxial system.

Once one has determined the circles $K_{i,j}$ the regions $U_i$ can be identified by use of the following rules. Each component of $U=\C\backslash (K_{1,2}\cup K_{1,3}\cup K_{2,3})$ is contained in a single set $U_i$ and the unbounded components are contained in $U_1$. If $C$ is some part of $K_{i,j}$ and $U_i$ is on one side of $C$ then by applying the maximum principle to $f_i(z)/f_j(z)$ one sees that $U_j$ is on the other side of $C$. If, however, $U_k$ is on one side of $C$ where $k\neq i$ and $k\neq j$ then $U_k$ is also on the other side of $C$ and the asymptotic spectrum $E$ does not contain any points in $C$.

By applying the above rules one can progressively determine which $U_i$ contains each component of $U$ and also eliminate certain curves $C\subseteq K_{i,j}$ as possible parts of $E$.  Figure~\ref{fig:h2k1}  portrays the eigenvalues of the matrix $A$ with $n=30$, $\alpha=[2, -1]$,
$\beta=[2,3]$ and $\gamma=5$. The circles involved are $|z-2|=2$, $|z+1|=3$ and $|z-4.4|=3.6$. The arcs removed from each circle are in accordance with the application of the above rules. The eigenvalue close to $5$ is given more accurately by $\lambda \sim 5.0000104$.
\end{example}

\begin{figure}[h!]
\centering
\scalebox{0.7}{\includegraphics{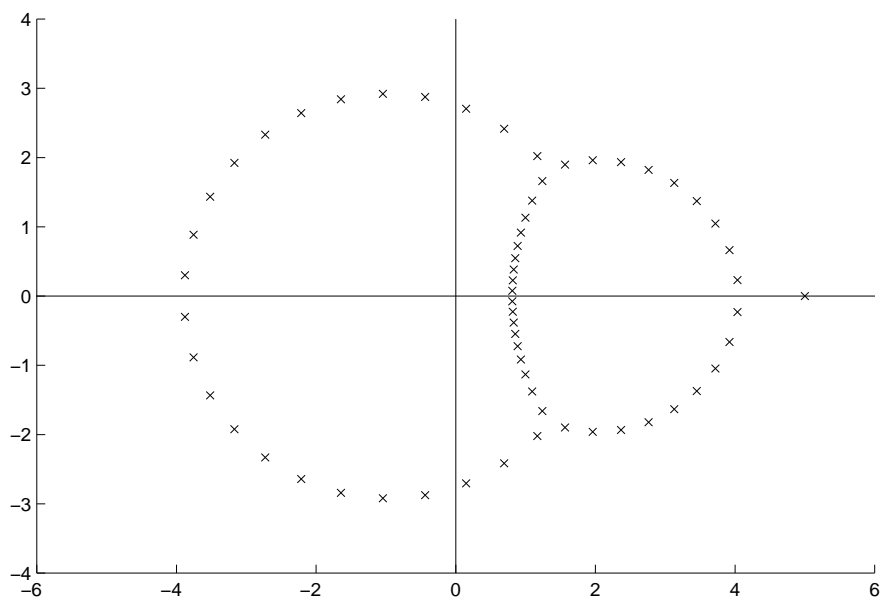}}
\caption{Eigenvalues of a matrix with $h=2$ and $k=1$.}
\label{fig:h2k1}
\end{figure}

If the two channels have different lengths, say $c_1n$ and $c_2n$, where $n\to\infty$, then only small changes are needed. The anti-Stokes curves $K_{1,2}$ and $K_{1,3}$ are unchanged but $K_{2,3}$ becomes the set of $z$ such that
$$
\left|\frac{z-\alpha_1}{\beta_1}\right|^{c_1}=%
\left|\frac{z-\alpha_2}{\beta_2}\right|^{c_2},
$$
which is only a circle if $c_1=c_2$. The general rules for locating
the eigenvalues still apply.

\begin{example}[$h=3$, $k=1$] The analysis in the previous example carries through for larger values of $h$.  In particular for $h=3$ the reduced matrix has $7$ free parameters apart from $n$ and is
$$
A(z) = \left ( \begin{array}{cccc}
            (z-\alpha_1)^n      &               &               &    -\beta_1^n \\
                            & (z-\alpha_2)^n    &               &    -\beta_2^n \\
                            &               & (z-\alpha_3)^n    &    -\beta_3^n \\
            -\beta_1            & -\beta_2          & -\beta_3          &    z-\gamma
        \end{array}
    \right )
$$
The characteristic polynomial is
\begin{eqnarray*}
p(z)&=&(z-\gamma)(z-\alpha_1)^n(z-\alpha_2)^n(z-\alpha_3)^n%
-\beta_1^{n+1}(z-\alpha_2)^n(z-\alpha_3)^n\\%
&& \ \  - \beta_2^{n+1}(z-\alpha_1)^n(z-\alpha_3)^n%
-\beta_3^{n+1}(z-\alpha_1)^n(z-\alpha_2)^n.
\end{eqnarray*}

Assuming that all the $\alpha_i$ are distinct, there are $6$ anti-Stokes lines, namely
\begin{eqnarray*}
K_{1,2} &=& \{ z:|z-\alpha_1|=|\beta_1|\}\\
K_{1,3} &=& \{ z:|z-\alpha_2|=|\beta_2|\}\\
K_{1,4} &=& \{ z:|z-\alpha_3|=|\beta_3|\}\\
K_{2,3} &=& \{ z:|\beta_2|\,|z-\alpha_1|=|\beta_1|\,|z-\alpha_2|\}\\
K_{2,4} &=& \{ z:|\beta_3|\,|z-\alpha_1|=|\beta_1|\,|z-\alpha_3|\}\\
K_{3,4} &=& \{ z:|\beta_2|\,|z-\alpha_3|=|\beta_3|\,|z-\alpha_2|\}
\end{eqnarray*}
These are all circles, or straight lines in degenerate cases.

In Figure~\ref{fig:h3k1}, we put $n=30$, $\alpha=[1,i,-i]$, $\beta=[1,3/2,3/2]$ and $\gamma=3$. One again the eigenvalues are close to the parts of the anti-Stokes lines obtained by using the rules given above.
\end{example}

\begin{figure}[h!]
\centering
\scalebox{0.7}{\includegraphics{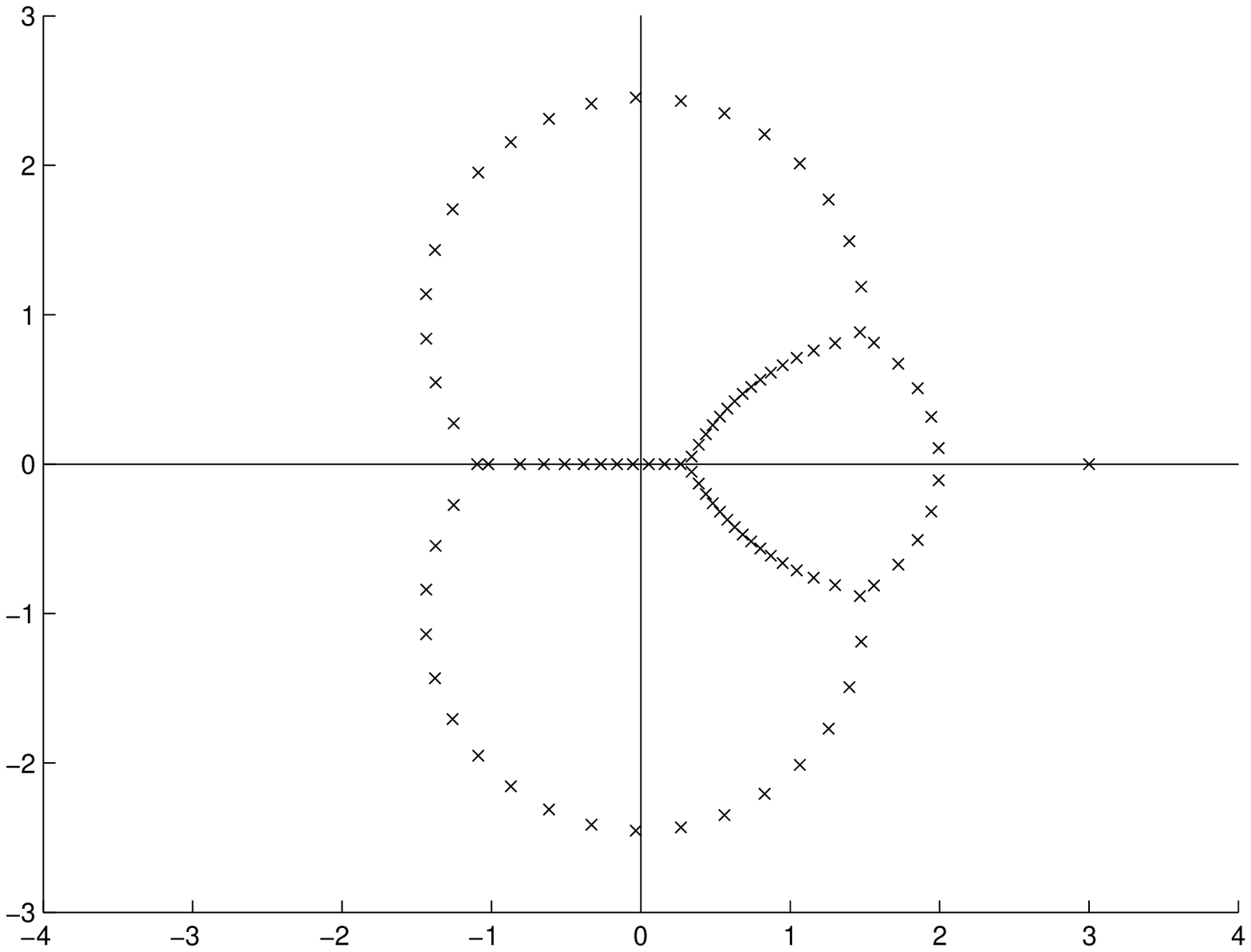}}
\caption{Eigenvalues of a matrix with $h=3$ and $k=1$.}
\label{fig:h3k1}
\end{figure}

\begin{example}[$h=k=2$] We consider a graph with two junctions, each containing one point, and two channels, each containing $n$ points. Each channel goes from one junction to the other, so that the graph has exactly one circuit. The matrix $A$ acts on vectors of length $2n+2$. The indices $2n+1,\, 2n+2$ label the junctions, $\{1,\dots,n\}$ label the first channel and $\{n+1,\dots,2n\}$ label the second channel. The matrix $A$ is defined by
$$
A=\left ( \begin{array}{cccc| cccc| cc}
        \alpha_1     & \beta_1  &       &       &           &       &        &  &   &   \\
                & \alpha_1  & \ddots    &       &           &       &        &  &   & \\
                &           & \ddots    & \beta_1   &           &       &        &  &   & \\
                &           &       & \alpha_1&         &       &       &    &  & \beta_1 \\ \hline
                &           &       &       & \alpha_2  & \beta_2   &        &  &   &   \\
                &           &       &       &           & \alpha_2& \ddots   &  &   &   \\
                &           &       &       &           &       & \ddots     & \beta_2  &   &   \\
                &           &       &       &           &       &       & \alpha_2 & \beta_2& \\ \hline
        \beta_1 &           &       &       &           &       &       &        & \gamma_1 &   \\
                &           &       &       & \beta_2       &       &        &      &   &\gamma_2
        \end{array}
    \right )
$$
The characteristic polynomial for general $n$ is
$$
p(z)=(z-\gamma_1)(z-\gamma_2)(z-\alpha_1)^n(z-\alpha_2)^n-\beta_1^{n+1}\beta_2^{n+1}.
$$
The spectrum converges to
\begin{equation}
\{ z:|(z-\alpha_1)(z-\alpha_2)|=|\beta_1\beta_2|\}\,
,\label{h2k2}
\end{equation}
as $n\to\infty$, apart from the possibility of isolated eigenvalues converging to $\gamma_1$ or $\gamma_2$. This curve has two points in common with each of the circles $|z-\alpha_i|=|\beta_i|$, provided these circles intersect.

We carried out computations for the case $n=30$, $\alpha=[-1.2,1.2]$, $\beta=[1.3,1.3]$ and $\gamma=[-2,2]$. Figure~\ref{fig:h2k2ps} plots the eigenvalues as crosses and the circles $|z-\alpha_i|=|\beta_i|$ as dotted curves. The curve (\ref{h2k2}) is simple but non-convex and crosses both circles at $\pm 0.5i$.

The other curves in the figure are the pseudospectral contours of $A$ for $\epsilon=10^{-m}$, where $m=0,...,6$, the outermost one, for $\epsilon=1$, being only partly visible; see \cite{LOTS,TE} for the definition. One deduces from these contours that the eigenvalues are highly unstable; indeed for $n=50$ they are not easily computable because of rounding errors. Note that the pseudospectral contours are related much more strongly to the pair of circles than they are to the eigenvalues.

If one links the two junctions together weakly by putting $$A(2n+1,2n+2)=A(2n+2,2n+1)=10^{-2}$$ then the spectrum of $A$ changes radically and approximates the union of the two circles more closely as $n$ increases. The case $n=100$ is plotted in Figure~\ref{fig:h2k2-100}.
\end{example}

\begin{figure}[h!]
\centering
\scalebox{0.75}{\includegraphics{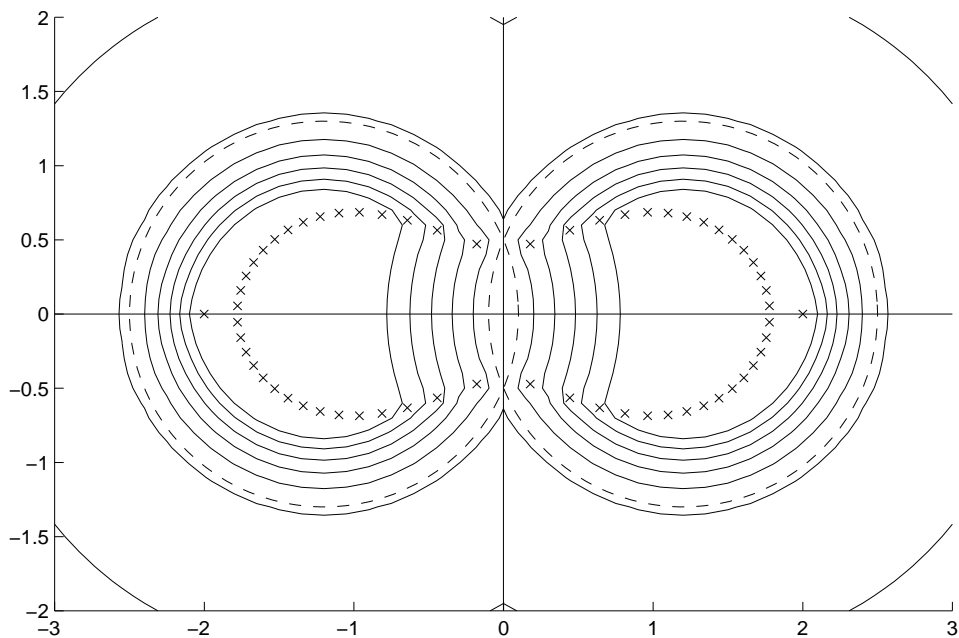}}
\caption{Eigenvalues of a matrix with $h=2$, $k=2$ and $n=30$.}
\label{fig:h2k2ps}
\end{figure}

\begin{figure}[h!]
\centering
\scalebox{0.75}{\includegraphics{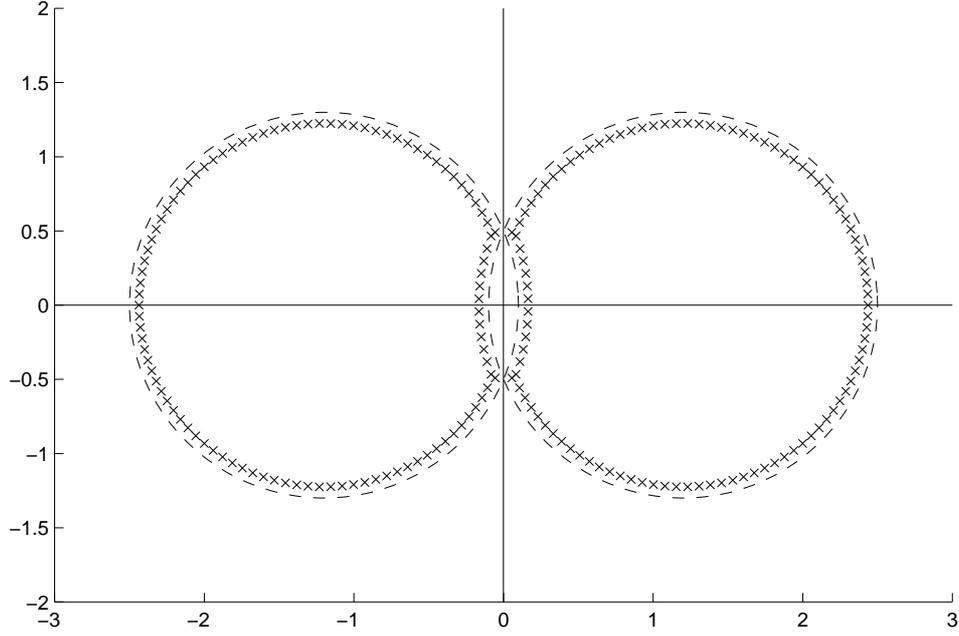}}
\caption{Eigenvalues of a matrix with $h=2$, $k=1$ and $n=100$.}
\label{fig:h2k2-100}
\end{figure}

\begin{example}[$h=3$, $k=2$] \label{three} The $(3n+3)\times (3n+3)$ matrix
$$
A_{i,j}=\left\{\begin{array}{ll}%
i/2&\mbox{if $1\leq i=j\leq n$,}\\
-2&\mbox{if $i=j=n+1$,}\\
-i/2&\mbox{if $n+2\leq i=j\leq 2n+1$,}\\
2&\mbox{if $i=j=2n+2$,}\\
3/2&\mbox{if $2n+3\leq i=j\leq 3n+2$,}\\
0&\mbox{if $i=j=3n+3$,}\\
1&\mbox{if $i+1=j$,}\\
1&\mbox{if $i=3n+3$, $j=1$,}\\
1&\mbox{if $i=n+1$, $j=3n+3$,}\\
0&\mbox{otherwise.}
\end{array}\right.
$$
has characteristic polynomial
\begin{eqnarray*}
\det(zI-A)&=& z(z^2 - 4) (z - i/2)^n(z + i/2)^n (z - 3/2)^n - (z -2)(z+i/2)^n(z-3/2)^n -1
\end{eqnarray*}
Its spectrum is depicted in Figure~\ref{fig:three} for $n=25$. The corresponding graph has $2$ junctions and $3$ channels, each of length $n$. Since $h=3$ there are potentially $8$ terms in (\ref{pgeneral}), but the explicit expression of $\det(zI - A)$ shows that only $3$ are non-zero. These give rise to the $3$ arcs in Figure~\ref{fig:three}. The eigenvalues near $\pm 2$ are associated with two of the three diagonal entries at the junctions.
\end{example}

\begin{figure}[h!]
\centering
\scalebox{0.7}{\includegraphics{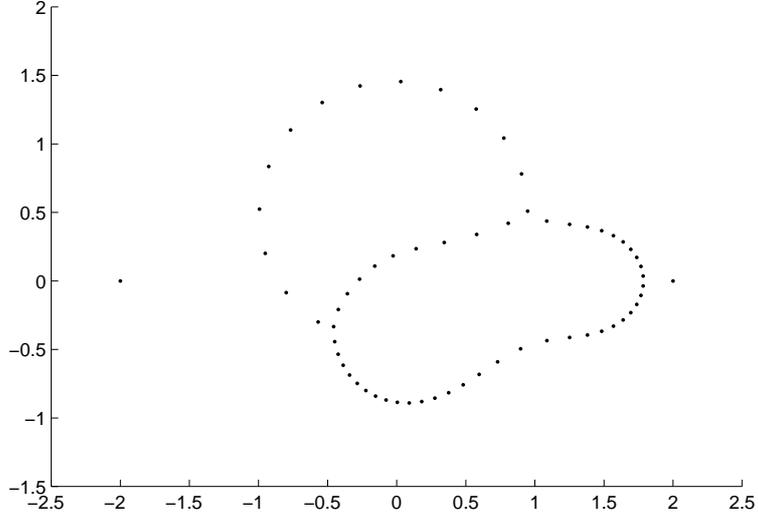}}
\caption{The spectrum of Example~\ref{three}.}
\label{fig:three}
\end{figure}

\section{Localization of Eigenvectors}

In the general model each channel $C_r$ is determined by three parameters, its length $e_r$, the diagonal entries $\alpha_r$ and the non-zero, off-diagonal entries $\beta_r$. We may associate with each channel $C_r$ the circle
$$ S_r = \{ z : |z - \alpha_r| = |\beta_r| \}.$$
If $v^{(n)}$ is an eigenvector of $A^{(n)}$ corresponding to the eigenvalue $\lambda^{(n)}$, it will be shown that the coordinates of $v^{(n)}$ corresponding to the channel $C^{(n)}_r$ are influenced by the proximity of $\lambda^{(n)}$ to the circle $S_r$. For convenience we label the vertices of channel $C^{(n)}_r$ by $\{1, \dots, n e_r \}$ so that the coordinates of $v^{(n)}$ corresponding to $C^{(n)}_r$ are $v^{(n)}_1, \dots, v^{(n)}_{n e_r}$.

\begin{theorem}\label{localize} Let $\{\lambda^{(n)}\}$ be a sequence such that $\lambda^{(n)}$ is an eigenvalue of $A^{(n)}$, and let $v^{(n)}$ denote any eigenvector corresponding to $\lambda^{(n)}$.
\begin{enumerate}
\item If there exists $\delta > 0$ such that
\begin{equation}
\dist(\lambda^{(n)}, S_r) \geq \delta \label{cond1}
\end{equation}
for all $n$, then there exists $c \in (0,1)$ such that for each $n$ either
$$ |v^{(n)}_i| \leq c^{i-1}|v^{(n)}_1| \ \ {\rm for} \ \ i = 1, \dots, ne_r $$
or
$$ |v^{(n)}_{ne_r-i}| \leq c^{i}|v^{(n)}_{ne_r}| \ \ {\rm for} \ \ i = 0, \dots, ne_r-1. $$

\item If there exists $C > 0$ such that
\begin{equation} \label{cond2}
\dist(\lambda^{(n)}, S_r) \leq \frac{C}{n}
\end{equation}
for all $n$, then there exists $d > 0$ such that for each $n$ either
$$ d^{-1} \leq \left | \frac{v^{(n)}_i}{v^{(n)}_j} \right | \leq d \ \ {\rm for \ all} \ \ i,j \in \{ 1, \dots, n e_r \}$$
or
$$ v^{(n)}_i = 0 \ \ {\rm for} \ \ i = 1, \dots, ne_r . $$
\end{enumerate}
\end{theorem}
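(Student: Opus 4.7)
The plan is to extract from the eigenvalue equation $A^{(n)} v^{(n)} = \lambda^{(n)} v^{(n)}$ a simple first-order linear recurrence that governs the coordinates of $v^{(n)}$ along the channel $C_r^{(n)}$, and then to read the claimed bounds directly off the modulus of its multiplier. With the convention of Section 4, for $1 \leq i \leq ne_r - 1$ the only non-zero entries of row $i$ of $A^{(n)}$ coming from the channel are $A^{(n)}_{i,i} = \alpha_r$ and $A^{(n)}_{i,i+1} = \beta_r$, so the eigenvalue equation at such $i$ collapses to
$$v^{(n)}_{i+1} = \mu_n \, v^{(n)}_i, \qquad \mu_n = \frac{\lambda^{(n)} - \alpha_r}{\beta_r}.$$
The crucial bridge between the hypotheses and the conclusions is the identity
$$\bigl||\mu_n| - 1\bigr| = \frac{\bigl||\lambda^{(n)} - \alpha_r| - |\beta_r|\bigr|}{|\beta_r|} = \frac{\dist(\lambda^{(n)}, S_r)}{|\beta_r|},$$
which expresses geometrically that $\lambda^{(n)}$ lies near $S_r$ precisely when $|\mu_n|$ is close to $1$.

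For part (1), the hypothesis $\dist(\lambda^{(n)}, S_r) \geq \delta$ forces $|\mu_n|$ to stay uniformly away from $1$, so there is a single constant $c \in (0,1)$, depending only on $\delta$ and $|\beta_r|$, such that for each $n$ either $|\mu_n| \leq c$ or $|\mu_n| \geq 1/c$. Iterating the recurrence gives $v^{(n)}_i = \mu_n^{i-1} v^{(n)}_1$; in the first case this yields $|v^{(n)}_i| \leq c^{i-1} |v^{(n)}_1|$ directly, and in the second case I invert the recurrence as $v^{(n)}_{ne_r - i} = \mu_n^{-i} v^{(n)}_{ne_r}$ to obtain $|v^{(n)}_{ne_r - i}| \leq c^i |v^{(n)}_{ne_r}|$. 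Taking $c = \max\{1 - \delta/|\beta_r|,\, (1 + \delta/|\beta_r|)^{-1}\}$ handles both alternatives with a single constant valid for all $n$.

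For part (2), the hypothesis $\dist(\lambda^{(n)}, S_r) \leq C/n$ gives $||\mu_n| - 1| \leq C/(n|\beta_r|)$, hence
$$|\mu_n|^{ne_r} \leq \left(1 + \frac{C}{n|\beta_r|}\right)^{ne_r} \longrightarrow e^{Ce_r/|\beta_r|},$$
with an entirely analogous lower bound. Setting $d = e^{Ce_r/|\beta_r|}$, this traps the ratios $|v^{(n)}_i / v^{(n)}_j| = |\mu_n|^{i-j}$ between $d^{-1}$ and $d$ whenever $v^{(n)}_1 \neq 0$, which is the first alternative. If instead $v^{(n)}_1 = 0$, then the recurrence propagates the zero through the whole channel, giving the second alternative; here one uses that $\mu_n \neq 0$ for $n$ sufficiently large, which is automatic since $|\mu_n| \to 1$ under the hypothesis.

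There is no substantive obstacle: once the along-channel recurrence is identified, both parts reduce to elementary estimates on $|\mu_n|^k$. The only minor care needed is the bookkeeping of the dichotomy in part (1) with a single $c$ valid uniformly in $n$, and the observation in part (2) that vanishing of $v^{(n)}_1$ propagates rather than creating spurious non-zero downstream coordinates.
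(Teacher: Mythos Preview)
Your proof is correct and follows essentially the same route as the paper: extract the recurrence $v^{(n)}_{i+1} = \mu_n v^{(n)}_i$ with $\mu_n = (\lambda^{(n)} - \alpha_r)/\beta_r$ from the eigenvalue equation along the channel, then translate the distance hypotheses into bounds on $|\mu_n|$ and iterate. Your explicit identity $||\mu_n|-1| = \dist(\lambda^{(n)},S_r)/|\beta_r|$ is a clean way to phrase the link; the only cosmetic point is that your formula $c = \max\{1-\delta/|\beta_r|,\,(1+\delta/|\beta_r|)^{-1}\}$ can be $\leq 0$ when $\delta \geq |\beta_r|$, in which case one should simply take $c = (1+\delta/|\beta_r|)^{-1}$ (the other alternative then forcing $\mu_n=0$, where the decay bound is trivial).
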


\begin{proof} The equation
$$ (\lambda^{(n)} I - A^{(n)}) v^{(n)} = 0 $$
implies
$$ (\lambda^{(n)} - \alpha_r) v^{(n)}_i = \beta_r v^{(n)}_{i+1} \ \ {\rm for} \ \ i = 1, \dots, ne_r - 1. $$
If $v^{(n)}_1 = 0$ then $v^{(n)}_i = 0$ for $i = 1, \dots, ne_r$. So suppose $v^{(n)}_1 \neq 0$.
Condition (\ref{cond1}) implies there exists a positive constant $c < 1$ such that either $|(\lambda^{(n)}-\alpha_r) / \beta_r| < c$ or $ |\beta_r/ (\lambda^{(n)} - \alpha_r)| < c$ for each $n$.  If $| (\lambda^{(n)} - \alpha_r)/\beta_r  | < c$ then
$$ |v_i^{(n)}| \leq c^{i-1} |v^{(n)}_1|  \ \ {\rm for} \ \ i = 1, \dots, ne_r $$
and if $|\beta_r/(\lambda^{(n)} - \alpha_r)| < c$ then
$$ |v^{(n)}_{n e_r - i}| \leq c^{i} |v^{(n)}_{n e_r}| \ \ {\rm for} \ \ i = 0, \dots, ne_r-1 . $$
and this proves 1). To prove 2) we observe that condition (\ref{cond2}) implies there exists $a>0$ such that
$$ 1 - \frac{a}{n} \leq \left| \frac{\lambda^{(n)} - \alpha_r}{\beta_r} \right| \leq 1 + \frac{a}{n} $$
and so
$$ \left( 1 - \frac{a}{n} \right)^{i-1}|v^{(n)}_1| \leq |v^{(n)}_i| \leq   \left( 1 + \frac{a}{n} \right)^{i-1}|v^{(n)}_1| $$
for $i = 1, \dots , n e_r$. Therefore
$$   \frac{(1-\frac{a}{n})^n}{(1+\frac{a}{n})^n} \leq \left | \frac{v^{(n)}_i}{v^{(n)}_j} \right | \leq
 \frac{(1+\frac{a}{n})^n}{(1-\frac{a}{n})^n}  $$
for all $i, j \in \{ 1, \dots, n e_r \}$.  Since $(1 \pm \frac{a}{n})^n \to e^{\pm a}$ as $n \to \infty$ the result follows.
\end{proof}

If the eigenvalues $\lambda^{(n)}$ are a positive distance away from all the circles $S_i$ then the corresponding eigenvectors $v^{(n)}$ decrease exponentially along all the channels. This suggests that in this case the eigenvectors will be concentrated around the junctions. We make this precise as follows.

Given $n$ and $N$ we define the subset $C_{n,N}$ of the graph $(S^{(n)},\to)$ by
$$
C_{n,N}=\bigcup_{r=1}^h\{ i\in C^{(n)}_r:N\leq i\leq ne_r-N\}.
$$
We say that a sequence of normalized eigenvectors $v^{(n)}$ are localized around the junctions if for all $\epsilon > 0$ there exists $N$, depending on $\epsilon$, such that $\Vert v^{(n)} |_{C_{n,N}} \Vert_2< \epsilon$ for all $n$. Note that if
$$
n\max\{e_i:1\leq i\leq h\}< 2N
$$
then $C_{n,N}=\emptyset$, so the bound is automatic. The localization condition therefore refers to the asymptotic behaviour of $A^{(n)}$ as $n\to\infty$.

\begin{theorem}\label{uniformlocalize} Let $\{v^{(n)}\}$ be a sequence of normalized eigenvectors of $A^{(n)}$ with corresponding eigenvalues $\lambda^{(n)}$. If there exists $\delta > 0$ such that for 
\[
\dist( \lambda^{(n)}, S_r) \geq \delta %
\hspace{2em} \mbox{for all $n$ and all $r = 1, \dots, h$}
\]
then the $v^{(n)}$ are localized around the junctions.
\end{theorem}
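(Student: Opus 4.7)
The plan is to apply Theorem~\ref{localize}(1) independently to each channel $C_r^{(n)}$, obtaining a uniform geometric decay rate along that channel away from one of its two endpoints, and then to assemble the per-channel estimates into a global bound on $\Vert v^{(n)}|_{C_{n,N}}\Vert_2$.

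First, for each $r\in\{1,\dots,h\}$, the hypothesis $\dist(\lambda^{(n)},S_r)\geq\delta$ for all $n$ permits an application of Theorem~\ref{localize}(1), which supplies a constant $c_r\in(0,1)$ independent of $n$ such that for every $n$ one of the two dichotomous decay estimates holds along $C_r^{(n)}$. Set $c=\max_{1\leq r\leq h} c_r$, which still lies in $(0,1)$. The normalization $\Vert v^{(n)}\Vert_2=1$ forces $|v^{(n)}_j|\leq 1$ for every coordinate $j$ of $v^{(n)}$; in particular the two endpoint coordinates of each channel are bounded by $1$.

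Next, along each channel $C_r^{(n)}$, whichever alternative of Theorem~\ref{localize}(1) is active, a direct geometric-series estimate gives
\[
\sum_{i=N}^{ne_r-N} |v^{(n)}_i|^2 \;\leq\; \sum_{i=N}^{\infty} c^{2(i-1)} \;=\; \frac{c^{2(N-1)}}{1-c^2},
\]
where the indices refer to the coordinates along the channel $C_r^{(n)}$ (reindexed from the other endpoint in the second alternative). Summing over the $h$ channels yields
\[
\Vert v^{(n)}|_{C_{n,N}} \Vert_2^{\,2} \;\leq\; \frac{h\,c^{2(N-1)}}{1-c^2},
\]
a bound that depends on $N$ but not on $n$. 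Given $\epsilon>0$ we then choose $N$ large enough that the right-hand side is less than $\epsilon^2$, which verifies the definition of uniform localization.

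I do not expect any substantive obstacle here. The one mild subtlety is that the alternative of Theorem~\ref{localize}(1) that holds may depend on both the channel index $r$ and on the parameter $n$, but both alternatives produce symmetric geometric bounds of the same shape, so the argument proceeds uniformly in either case. The crucial input is the normalization, which converts the relative bound on $|v^{(n)}_i/v^{(n)}_1|$ (or $|v^{(n)}_{ne_r-i}/v^{(n)}_{ne_r}|$) supplied by Theorem~\ref{localize}(1) into an absolute bound summable into the $\ell_2$-norm.
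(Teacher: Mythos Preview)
Your proposal is correct and follows essentially the same argument as the paper: apply Theorem~\ref{localize}(1) to each channel, use the normalization $\Vert v^{(n)}\Vert_2=1$ to convert the relative decay into an absolute bound $|v^{(n)}_i|\leq c^{i-1}$, sum the resulting geometric series over the $h$ channels, and choose $N$ so that $h\,c^{2(N-1)}/(1-c^2)<\epsilon^2$. You are slightly more explicit than the paper in taking $c=\max_r c_r$ and in noting that the two alternatives of Theorem~\ref{localize}(1) are handled symmetrically, but the substance is identical.
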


\begin{proof} Let $\epsilon>0$ and choose $N$ large enough so that
$$\frac{c^{2(N-1)}}{1 - c^2} < \frac{\epsilon^2}{h}.$$
By Theorem~\ref{localize} there exists a positive constant $c < 1$ such that for $r = 1,\dots, h$ and all $n$ either
$$ |v^{(n)}_i| \leq c^{i-1} \ \ {\rm for} \ \ i = 1, \dots, ne_r $$
or
$$ |v^{(n)}_{ne_r-i}| \leq c^{i-1}\ \ {\rm for} \ \ i = 0, \dots, ne_r-1. $$
Therefore
$$ \Vert v_{C_{n,N}} \Vert^2_2 = \sum_{r=1}^{h}  \sum_{i=N}^{n e_r - N} |v^{(n)}_i|^2 \leq \sum_{r=1}^{h} \sum_{i=N}^{n e_r - N} (c^2)^{i-1} \leq h\frac{c^{2(N-1)}}{1 - c^2} < \epsilon^2$$
for all $n$, as required.
\end{proof}

\newpage
{\bf Acknowledgements} The first author should like to thank M Levitin for a number of helpful suggestions.

Department of Mathematics\\
King's College\\
Strand\\
London WC2R 2LS \\ \\
E.Brian.Davies@kcl.ac.uk \\
Paul.Incani@kcl.ac.uk

\end{document}